\title[Hyperbolic surfaces with cone singularities]{Minimal diffeomorphism between hyperbolic surfaces with cone singularities}
\author{J\'{e}r\'{e}my Toulisse}
\address{
University of Luxembourg, Campus Kirchberg \\
Mathematics Research Unit BLG \\
6, rue Richard Coudenhove-Kalergi \\
L-1359 Luxembourg \\
Grand Duchy of Luxembourg}
\email{jeremy.toulisse@uni.lu}
\date{\today}
\newcommand{\fkp}{\mathfrak{p}}
\newcommand{\M}{\mathcal{M}_{-1}^\alpha}
\newcommand{\RR}{\mathbb{R}}
\newcommand{\ZZ}{\mathbb{Z}}
\newcommand{\Diff}{\mathcal{Diff}_0(\Sigma_\fkp)}
\newcommand{\F}{\mathcal{F}_\alpha(\Sigma_\fkp)}
\newcommand{\FF}{\mathcal{F}_{\alpha'}(\Sigma_\fkp)}
\newcommand{\T}{\mathcal{T}(\Sigma_\fkp)}
\newcommand{\Ker}{\text{Ker}}
\newcommand{\tr}{\text{tr}}
\newcommand{\E}{\mathcal{E}}
\newcommand{\z}{\overline{z}}
\newcommand{\g}{\widetilde{g}}
\newcommand{\h}{\widetilde{h}}
\newcommand{\Hi}{\mathcal{H}}
\newcommand{\D}{\mathcal{D}}
\newcommand{\Sym}{\mathit{S}}
\theoremstyle{definition}
\newtheorem{Def}{Definition}[section]
\theoremstyle{plain}
\newtheorem{theo}{Theorem}[section]
\newtheorem*{Theo}{Main Theorem}
\newtheorem*{nota}{Notations}
\newtheorem{prop}[Def]{Proposition}
\newtheorem{cor}[Def]{Corollary}
\newtheorem{lemma}[Def]{Lemma}
\theoremstyle{remark}
\newtheorem{rem}{Remark}[section]
\newtheorem*{ex}{Example}
\begin{document}

\maketitle
 
\begin{abstract}
We prove the existence of a minimal diffeomorphism isotopic to the identity between two hyperbolic cone surfaces $(\Sigma,g_1)$ and $(\Sigma,g_2)$ when the cone angles of $g_1$ and $g_2$ are different and smaller than $\pi$. When the cone angles of $g_1$ are strictly smaller than the ones of $g_2$, this minimal diffeomorphism is unique.
\end{abstract}
 
 \tableofcontents
 
\section{Introduction}

A diffeomorphism $f:(M,g_1)\longrightarrow (N,g_2)$  between two Riemannian manifolds is called \textbf{minimal} if its graph $\Gamma$ is a minimal submanifold of $(M\times N,g_1\oplus g_2)$ (that is its mean curvature tensor field vanishes everywhere).  Minimal diffeomorphisms between hyperbolic surfaces have been studied by F. Labourie and independently R. Schoen \cite{labourie},\cite{schoen}. They proved that for any two hyperbolic metrics $g_1$ and $g_2$ on $\Sigma$, there exists a unique minimal diffeomorphism $\Psi:(\Sigma,g_1)\longrightarrow (\Sigma,g_2)$ isotopic to the identity. Such a minimal diffeomorphism is also area-preserving and so its graph is a Lagrangian submanifold of $(\Sigma\times\Sigma,\omega_1\oplus(-\omega_2))$ (where $\omega_i$ is the area form associated to $g_i$); we call such a map a \textbf{minimal Lagrangian} diffeomorphism. $\Psi$ is related to harmonic maps. It is well-known (see \cite{sampson}, \cite{wolf}) that, given a conformal struture $\frak{c}$ and a hyperbolic metric $h$ on $\Sigma$, there exists a unique harmonic diffeomorphism $u: (\Sigma,\frak{c}) \longrightarrow (\Sigma,h)$ isotopic to the identity  and $h$ is characterized by the Hopf differential $\Phi(u)$ of $u$ (see Section \ref{teich} for definitions). For each pair of hyperbolic metrics on $\Sigma$, there exists a unique conformal structure $\frak{c}$ such that $\Phi(u_1)+\Phi(u_2)=0$ where $u_i: (\Sigma,\frak{c})\longrightarrow (\Sigma,g_i)$ is the unique harmonic map isotopic to the identity ($i=1,2$). Moreover, $u_2\circ u_1^{-1}$ is minimal Lagrangian and isotopic to the identity.

For an angle $\theta\in(0,2\pi)$, consider the metric obtained by gluing an angular sector of angle $\theta$ between two half-lines in the hyperbolic disk by a rotation. This metric is called \textbf{local model for hyperbolic metric with cone singularity of angle $\theta$}. For a marked surface $\Sigma_\fkp:=\Sigma\setminus \fkp$ where $\fkp=(p_1,...,p_n)\subset \Sigma$ and for $\alpha:=(\alpha_1,...,\alpha_n)\in \left(0,\frac{1}{2}\right)^n$ such that $\chi(\Sigma)+\sum_{i=1}^n(\alpha_i-1)<0$, one can construct the Fricke space of $\Sigma_\fkp$ with cone singularities of angle $\alpha$, denoted by $\F$,  as the moduli space of marked (where the marking fix each $p_i\in\frak{p}$) hyperbolic metrics on $\Sigma_\fkp$ with cone singularities of angle $2\pi\alpha_i$ at the $p_i$ (see Section \ref{teich} for the construction). In a previous paper \cite{toulisse}, we proved the existence of a unique minimal Lagrangian diffeomorphism isotopic to the identity for each pair of points $g_0,g_1\in\F$ (that is when the cone angles of $g_1$ and $g_2$ are equal). The proof of this result used the deep relations between three dimensional AdS geometry and hyperbolic surfaces: we showed the existence of a unique surface maximizing the area in some AdS singular spacetimes, and proved that it implies the existence of a unique minimal Lagrangian map (see \cite{toulisse} for more details). In this paper, we address the question of the existence and uniqueness of minimal diffeomorphism between hyperbolic cone surfaces with different cone angles. In particular, we prove:

\begin{Theo}
Given $\alpha,\alpha' \in \left( 0,\frac{1}{2} \right)^n,~g_1\in \F$ and $g_2\in \FF$, there exists a minimal diffeomorphism $\Psi: (\Sigma_\fkp,g_1)\longrightarrow (\Sigma_\fkp,g_2)$ isotopic to the identity. If moreover for all $i\in\{1,...,n\},~\alpha_i<\alpha'_i$ then $\Psi$ is unique.
\end{Theo}

The proof of this result is totally different from the proof in \cite{toulisse}. Here, we study the energy functional over $\T$, the Teichm\"uller space of $\Sigma_\fkp$. In his thesis \cite{jesse}, J. Gell-Redman  proved the existence of a unique harmonic diffeomorphism isotopic to the identity from a conformal surface to a surface endowed with a negatively curved metric with cone singularities of angles less than $\pi$. So, given a hyperbolic metric $h$ with cone singularites of angle $2\pi\alpha_i$ at the $p_i$, we can define the energy functional $\E_h: \T\longrightarrow \mathbb{R}$ which associates to a conformal structure on $\Sigma_\fkp$ the energy of the unique harmonic diffeomorphism $u: (\Sigma_\fkp,\frak{c}) \longrightarrow (\Sigma_\fkp,h)$  provided by \cite{jesse}. This functional only depends on the class of $h$ in $\F$.

In Section \ref{teich}, we give a precise definition of hyperbolic surfaces with cone singularities and construct $\F$.

In Section \ref{energyfunctional}, we define and study the energy functional. In particular, we prove that $\E_h$ is a proper function whose gradient at a point $\frak{c}\in\T$ is given by minus two times the real part of the Hopf differential of the harmonic map $u: (\Sigma_\fkp,\frak{c})\to (\Sigma_\fkp,h)$.

In Section \ref{minimal}, we prove the Main Theorem. To each local critical point of $\E_{g_1}+\E_{g_2}$, we construct a minimal diffeomorphism from $(\Sigma_\fkp,g_1)$ to $(\Sigma_\fkp,g_2)$. Uniqueness comes from stability of minimal graphs in $(\Sigma_\fkp\times\Sigma_\fkp,g_1\oplus g_2)$ which follows  from an application of the maximum principle to elliptic PDE satisfied by the harmonic diffeomorphisms.

It would be interesting to study the possible relations between the minimal map of the Main Theorem and AdS geometry. In particular, this minimal map should be related to some ``maximal'' surface in some AdS manifold with spin particles (as introduced in \cite{barbot} in the Minkowski case). We leave this question for a future work.

\textbf{Aknowledgment:} I would like to thank J.-M. Schlenker for valuable discussions about the subject.

\section{Fricke space with cone singularities}\label{teich}

\subsection{Hyperbolic disk with cone singularity}\label{sing}

Let $\alpha\in(0,1)$ and $\mathbb{H}^2:=(\mathbb{D}^2,g_{p})$ be the unit disk equipped with the Poincar\'{e} metric. Cut $\mathbb{D}^2$ along two half-lines making an angle $2\pi\alpha$ intersecting at the center $0$ of $\mathbb{D}^2$ and define $\mathbb{H}^2_\alpha$ as the space obtained by gluing the boundary of the angular sector of angle $2\pi\alpha$ by a rotation fixing $0$. Topologically, $\mathbb{H}^2_\alpha=\mathbb{D}^2\setminus\{0\}$ and the induced metric $g_\alpha$ (which is not complete) is hyperbolic outside $0$ and carries a conical singularity of angle $2\pi\alpha$ at $0$. We call $\mathbb{H}^2_\alpha=(\mathbb{D}^*,g_\alpha)$ the \textbf{hyperbolic disk with cone singularity of angle $2\pi\alpha$}.

In conformal coordinates, we have the well-known expression: 
$$g_{p}=\frac{4}{(1-\vert \widetilde{z} \vert^2)^2}\vert d\widetilde{z}\vert^2.$$
Using the coordinates $\widetilde{z}=\frac{1}{\alpha}z^\alpha$, we obtain:
$$g_\alpha=\frac{4\vert z \vert^{2(\alpha-1)}}{(1-\alpha^{-2}\vert z \vert^{2\alpha})^2}\vert dz \vert^2.$$
In cylindrical coordinates $(\rho,\theta)\in\mathbb{R}_{>0}\times \RR/2\pi\alpha\ZZ$, we have:
$$g_\alpha=d\rho^2+\sinh^2\rho d\theta^2.$$

\subsection{Hyperbolic surface with cone singularities}\label{hyperbolicconesurfaces}
Here we define the moduli space of hyperbolic metrics with cone singularities. Before that, we need to introduce weighted H\"older spaces adapted to the study of metrics with conical singularities and to the existence of harmonic maps (see \cite[Section 2.2]{jesse}):
\begin{Def}
For $R>0$, let $D(R):=\{ z\in\mathbb{C},~\vert z\vert \in (0,R)\}$. We say that a function $f: D(R)\longrightarrow \mathbb{C}$ is in $\chi^{0,\gamma}_b(D(R))$ with $\gamma\in (0,1)$ if, writing $z=re^{i\theta}$ and $z'=r'e^{i\theta'}$
$$\Vert f\Vert_{\chi^{0,\gamma}_b}:=\underset{{D(R)}}{\sup}\vert f\vert+\underset{z,z'\in D(R)}{\sup}\frac{\vert f(z)-f(z')\vert}{\vert \theta-\theta'\vert^\gamma +\frac{\vert r-r'\vert^\gamma}{\vert r+r'\vert^\gamma}}<+\infty.$$
We say that $f\in \chi^{k,\gamma}_b(D(R))$ if, for all linear differential operator $L$ of order $k$, $L(f)\in \chi^{0,\gamma}_b(D(R))$ (note that in particular, $f\in \mathcal{C}^k(D(R))$).
\end{Def}
\textbf{From now and so on, all the cone angles will be considered strictly smaller than $\pi$.}

Let $\Sigma$ be a closed oriented surface, $\fkp=(p_1,...,p_n)\subset \Sigma$ be a set of points. Denote by $\Sigma_\fkp:=\Sigma\setminus \fkp$ and let $\alpha:=(\alpha_1,...,\alpha_n)\in \left(0,\frac{1}{2}\right)^n$ be such that $\chi(\Sigma_\fkp)-\sum_{i=1}^n(\alpha_i-1)<0$ (this condition implies the existence of hyperbolic metric with cone singularities). 

\begin{Def}
A hyperbolic metric on $\Sigma_\fkp$ with cone singularities of angle $\alpha$ is a metric $g$ so that
\begin{itemize}
\item[-] For each compact $K\subset \Sigma_\fkp$, $g_{\vert K}$ is $\mathcal{C}^2$ and has constant curvature $-1$, 
\item[-] for each puncture $p_i\in \fkp$, there exists a neighborhood $U$ with local conformal coordinates $z$ centered at $p_i$ together with a local diffeomorphism $\psi\in \chi^{2,\gamma}_b(U)$ so that
$$g_{\vert U}= \psi^*g_{\alpha_i}.$$
\end{itemize}
We denote by $\M$ the space of such metrics.
\end{Def}

\begin{rem}\label{generalsingularity}
In the general case, one says that a metric $g$ on $\Sigma_\fkp$ has a conical singularity of angle $2\pi\alpha$ at $p\in \fkp$ if in a neighborhood of $p$, $g$ has the form
$$g=e^{2\lambda}\vert z\vert^{2(\alpha-1)}\vert dz\vert^2,$$
where $\lambda$ is a continuous function (see \cite{troyanov}).
\end{rem}

\begin{Def}
Let $\Diff$ be the space of diffeomorphisms $\psi$ of $\Sigma_\fkp$ isotopic to the identity (in the isotopy class fixing each $p_i\in\fkp$) so that, for each compact $K\subset \Sigma_\fkp$, $\psi_{\vert K}$ is of class $\mathcal{C}^3$ and, for each marked point $p_i\in \fkp$, there exists an open neighborhood $U$ so that $\psi\in \chi^{2,\gamma}_b(U)$.
\end{Def} 
Note that, $\Diff$ acts by pull-back on $\M$ and the quotient space $\F:=\M/\Diff$ is a smooth manifold called the \textbf{Fricke space with cone singularities of angles $\alpha$}.

\begin{rem}
The regularity condition around the punctures we impose is the one we need to use the Theorem of \cite{jesse} providing the existence of a harmonic diffeomorphism isotopic to the identity.
\end{rem}

\begin{prop}\label{boundeddistance}
For a fixed $\alpha\in \left(0,\frac{1}{2}\right)^n$ and all $i\in\{1,...,n\}$, there exists $r_i>0$ such that for each hyperbolic metric with cone singularities $g\in \M$ the open set $V_i:=\{x\in \Sigma_\fkp,~d_g(x,p_i)<r_i\}$ is isometric to a neighborhood of $0$ in $\mathbb{H}^2_{\alpha_i}$ (here $d_g(.,.)$ is the distance w.r.t. $g$).
\end{prop}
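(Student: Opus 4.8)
The plan is to show that near each cone point the geometry is uniformly controlled, so that a fixed-size geodesic ball around $p_i$ looks exactly like a standard model ball in $\mathbb{H}^2_{\alpha_i}$. The key structural fact to exploit is that the local model $g_{\alpha_i}$ is \emph{rigid}: in the cylindrical coordinates $(\rho,\theta)$ the metric is $d\rho^2+\sinh^2\rho\, d\theta^2$, where $\rho$ is literally the distance to the singular point and $\theta\in\RR/2\pi\alpha_i\ZZ$. So the model ball of radius $r$ about the cone point is canonically $\{\rho<r\}$, and any isometry onto such a ball is automatic once I can produce one abstractly. The content of the proposition is therefore that the defining diffeomorphism $\psi\in\chi^{2,\gamma}_b(U)$ with $g_{\vert U}=\psi^*g_{\alpha_i}$ can be upgraded to an honest isometry on a ball of radius $r_i$ \emph{uniform over all of $\M$}.

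First I would invoke the definition of $g\in\M$: around $p_i$ there is a conformal coordinate $z$ on a neighborhood $U$ and a local diffeomorphism $\psi\in\chi^{2,\gamma}_b(U)$ with $g_{\vert U}=\psi^*g_{\alpha_i}$. By construction $\psi$ is an isometry from $(U,g)$ onto its image $\psi(U)\subset\mathbb{H}^2_{\alpha_i}$, and $\psi(p_i)=0$ since $\psi$ sends the cone point to the cone point (the singular locus is intrinsic). Since $\psi$ is an isometry, it carries $g$-geodesics emanating from $p_i$ to $g_{\alpha_i}$-geodesics emanating from $0$, and it preserves distances, so $d_g(x,p_i)=d_{g_{\alpha_i}}(\psi(x),0)=\rho(\psi(x))$. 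Hence the set $V_i=\{d_g(\cdot,p_i)<r\}$ is mapped isometrically onto $\{\rho<r\}$, which is precisely the model ball, \emph{provided} $\{\rho<r\}\subset\psi(U)$, i.e. provided the model ball of radius $r$ fits inside the image of the chosen neighborhood.

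The genuine issue is uniformity: the neighborhood $U$ and the map $\psi$ depend on $g$, so a priori the radius $r$ for which $\{\rho<r\}\subset\psi(U)$ could shrink to zero as $g$ varies over $\M$. The main obstacle is thus to produce a single $r_i>0$ working for all $g\in\M$ simultaneously. I would argue this by a convexity/collar estimate: because every metric in $\M$ is hyperbolic of curvature $-1$ away from the singularity and has the \emph{same} cone angle $2\pi\alpha_i<\pi$ at $p_i$, a standard comparison shows that for $\rho$ below the distance to the nearest other singular point or topological obstruction, the exponential map from $p_i$ remains a diffeomorphism onto the metric ball and that ball is isometric to the model. Concretely, the angle being $<\pi$ (in fact $\alpha_i<\tfrac12$) guarantees there are no conjugate points and no short geodesic loops based at $p_i$ cutting off the collar prematurely; the curvature bound $\equiv -1$ and the fixed angle then pin down the intrinsic geometry on $\{\rho<r\}$ to be exactly the model, independently of $g$. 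The uniform choice of $r_i$ comes from bounding below the injectivity radius of the smooth locus near each $p_i$ uniformly over $\M$, which follows from the fact that the cone angles and the curvature are fixed and that any degeneration would force a cone point to collide with another singularity or with itself, contradicting the marking. I expect the delicate point to be ruling out such collisions uniformly, i.e. establishing that $\inf_{g\in\M} d_g(p_i,\,\Sigma_\fkp\setminus V_i)$ stays bounded away from zero.
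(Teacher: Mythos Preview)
Your outline correctly isolates the crux: once you know the metric ball $\{d_g(\cdot,p_i)<r\}$ sits inside the domain of the local isometry $\psi$, the result is immediate, so the entire content is the \emph{uniform} lower bound on the injectivity radius at $p_i$, equivalently a uniform lower bound on $d_g(p_i,p_j)$ (and on the length of any geodesic loop based at $p_i$) over all $g\in\M$. But you do not actually prove this; you assert that a degeneration ``would force a cone point to collide with another singularity or with itself, contradicting the marking.'' That is not an argument: the marking is purely topological and places no metric constraint whatsoever on how close $p_i$ and $p_j$ can be. A priori there could be a sequence $g_k\in\M$ with $d_{g_k}(p_i,p_j)\to 0$, and nothing you wrote excludes it. This is exactly the ``delicate point'' you flag at the end, and it is the whole proposition.

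The paper supplies precisely this missing step, and the mechanism is hyperbolic trigonometry rather than any soft compactness or marking argument. One takes the geodesic segment $\beta$ from $p_1$ to $p_2$ and the closed geodesic $\gamma$ homotopic to a loop encircling both cone points. Because the cone angles are $<\pi$, a Jacobi-field argument shows $\gamma$ stays a positive distance from $\beta$; cutting the resulting pair of pants along $\beta$ and the two bisecting arcs yields two isometric hyperbolic quadrilaterals. As $l(\gamma)\to 0$ these degenerate to an ideal triangle with angles $\pi\alpha_1,\pi\alpha_2,0$, and the hyperbolic law of cosines gives the explicit bound
\[
\cosh\big(d_g(p_1,p_2)\big)\ \ge\ \frac{1+\cos(\pi\alpha_1)\cos(\pi\alpha_2)}{\sin(\pi\alpha_1)\sin(\pi\alpha_2)},
\]
which depends only on $\alpha$. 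Applied in the universal cover this also bounds the length of geodesic loops at $p_i$, yielding the uniform $r_i$. Your framework is fine, but you need this quantitative input (or an equivalent one) to close the gap.
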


\begin{proof}
The result follows from the fact that the distance between two conical singularities of angles less than $\pi$ on a hyperbolic surface is bounded from below.

Let $p_1$ and $p_2$ two conical singularities of angles $2\pi\alpha_1<\pi$ and $2\pi\alpha_2<\pi$ respectively on a hyperbolic cone surface. Let $\beta$ be an embedded geodesic segment joining $p_1$ and $p_2$, and denote by $\gamma$ the unique geodesic in a regular neighborhood of $\beta$ homotopic to a simple closed curve around $p_1$ and $p_2$. Finally, denote by $\delta_i$ the geodesic arc from $p_i$ making an angle $\pi\alpha_i$ with $\beta$ ($i=1,2$).

We claim that, as $2\pi\alpha_1$ and $2\pi\alpha_2$ are (strictly) smaller than $\pi$, the distance between $\beta$ and $\gamma$ is strictly positive. In fact, take a regular neighborhood $U$ of $\beta$, and cut it along $\beta$, $\delta_1$ and $\delta_2$. We get two connected components $V$ and $W$, each containing $\beta,~\delta_1$ and $\delta_2$ in their boundary. By a hyperbolic isometry, send $V$ to the upper half-plane model of $\mathbb{H}^2$, sending $\beta$ on the imaginary axis. Denote by $N$ the unit (for the Euclidean metric) vector field orthogonal to $\beta$ pointing to the interior of $V$. Note that $N$ is a Jacobi field. For $\epsilon>0$ small enough, the length of the geodesic arc $\beta_\epsilon:=\exp(\epsilon N)\cap V$ is strictly smaller than the length of $\beta$ (see Figure \ref{beta}). It implies that if $\gamma$ is too close to $\beta$ (or even coincide), then a local deformation of $\gamma$ along the vector field $N$ would strictly decreases its length. So the distance between $\gamma$ and $\beta$ is strictly positive.

\begin{figure}[!h] 
\begin{center}
\includegraphics[height=6cm]{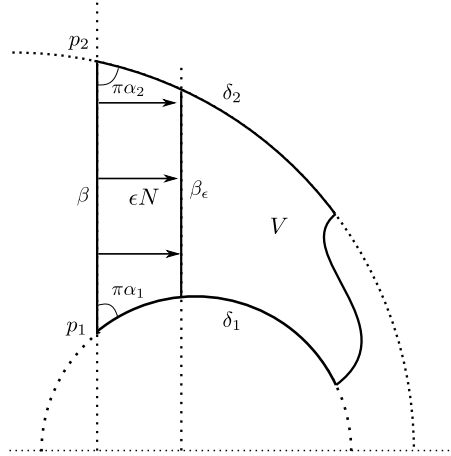}
\end{center}
\caption{The geodesic $\beta_\epsilon$} 
\label{beta}
\end{figure}

Now, consider the connected component $S$ of $\Sigma\setminus \gamma$ containing $p_1$ and $p_2$, and cut it along $\beta$, $\delta_1$ and $\delta_2$. The remaining surfaces are two isometric hyperbolic quadrilaterals (see Figure \ref{quadrilateral}). When the length of $\gamma$ tends to zero, each quadrilateral tends to a hyperbolic triangle of angles $\pi\alpha_1,~\pi\alpha_2$ and $0$. In such a triangle, the length on $\beta$ satisfies
$$\cosh(l(\beta))=\frac{1+\cos(\pi\alpha_1)\cos(\pi\alpha_2)}{\sin(\pi\alpha_1)\sin(\pi\alpha_2)}.$$
It corresponds to the lower bound for the distance between two hyperbolic cone singularities of angles $2\pi\alpha_1$ and $2\pi\alpha_2$. 

\begin{figure}[!h] 
\begin{center}
\includegraphics[height=6cm]{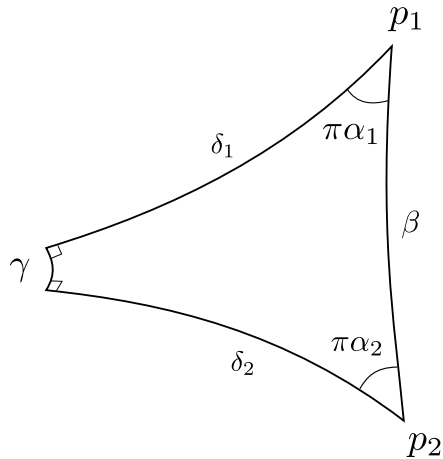}
\end{center}
\caption{Hyperbolic quadrilateral} 
\label{quadrilateral}
\end{figure}

Applying this result to the universal covering of $\Sigma_\fkp$, we get a lower bound for the injectivity radius of the singular points on a hyperbolic cone surface.
\end{proof}
From now and so on, we fix a cylindrical coordinates system $(\rho_i,\theta_i): V_i \to \mathbb{H}^2_{\alpha_i}$ centered at $p_i$ for each $i\in\{1,...,n\}$ (where the $V_i$ are as in Proposition \ref{boundeddistance}). Note that Proposition \ref{boundeddistance} implies that, up to a gauge, we can always assume that for each $i\in\{1,...,n\}$, every metric $g\in \M$ has the following expression:
$$g_{\vert U_i}=d\rho_i^2+\sinh^2\rho_id\theta_i^2.$$
We get the following Corollary:

\begin{cor}\label{gaugefixing}  Let $g_0\in \M$ and let $\widetilde{h}:=\frac{d}{dt}_{\vert_{t=0}} g_t$ be a deformation of $g_0$. There exists a vector field $v\in \text{Lie}(\Diff)$ (the Lie algebra of $\Diff$), so that
 $$\widetilde{h}=h+\mathcal{L}_vg_0,\text{ and } h_{\vert V_i}=0~~\forall i\in\{1,...,n\}.$$
Here $\mathcal{L}_vg_0$ is the Lie derivative of $g$ in the direction $v$ and the $V_i$ are defined as in Proposition \ref{boundeddistance}. We call such a $h$ a \textbf{normalized deformation}.
\end{cor}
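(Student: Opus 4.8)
The plan is to differentiate the normalization recorded just before the statement. By hypothesis $\widetilde{h}=\frac{d}{dt}\big|_{t=0}g_t$ is the velocity of a path $(g_t)_t$ inside $\M$, so every $g_t$ is itself a hyperbolic metric with cone singularities of the \emph{same} angles $\alpha$. The crucial consequence is that $g_t$ is rigid near the punctures: by Proposition \ref{boundeddistance}, for each $t$ the neighborhood $V_i$ is isometric to a fixed ball $W_i$ of radius $r_i$ around $0$ in $\mathbb{H}^2_{\alpha_i}$. I will exploit this rigidity to show that $\widetilde{h}$ is \emph{infinitesimally pure gauge} on each $V_i$, and then glue the resulting local vector fields together.

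First I would fix $i$ and let $F^t_i\colon V_i(t)\to W_i$ denote the isometry onto the model provided by Proposition \ref{boundeddistance}, with $F^0_i$ the fixed chart putting $g_0$ in model form. Setting $\phi^i_t:=(F^t_i)^{-1}\circ F^0_i$ gives a map $V_i\to V_i(t)\subset\Sigma_\fkp$ with $\phi^i_0=\mathrm{id}$ and, since both charts are isometries onto $(W_i,g_{\alpha_i})$,
$$(\phi^i_t)^*g_t=g_0\quad\text{on }V_i.$$
Differentiating this identity at $t=0$ and writing $v_i:=-\frac{d}{dt}\big|_{t=0}\phi^i_t$ yields $\widetilde{h}=\mathcal{L}_{v_i}g_0$ on $V_i$. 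As the neighborhoods $V_i$ are pairwise disjoint (Proposition \ref{boundeddistance} bounds the distance between distinct punctures from below), I would choose cutoff functions $\chi_i$ equal to $1$ on $V_i$ and supported in disjoint neighborhoods of the punctures, and set $v:=\sum_i\chi_i v_i$.

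Putting $h:=\widetilde{h}-\mathcal{L}_v g_0$ then gives the desired decomposition $\widetilde{h}=h+\mathcal{L}_v g_0$ globally on $\Sigma_\fkp$. On each $V_i$ one has $v=v_i$, hence $\mathcal{L}_v g_0=\widetilde{h}$ and $h$ vanishes identically there, which is exactly $h_{\vert V_i}=0$; away from the $V_i$ no constraint on $h$ is required, so the cutoffs cause no trouble.

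The main obstacle is regularity: I must check that $v\in\text{Lie}(\Diff)$, i.e. that $v$ is $\mathcal{C}^3$ on compact subsets of $\Sigma_\fkp$ and lies in $\chi^{2,\gamma}_b$ near each puncture. Away from the punctures this is immediate from smoothness of the path and of the cutoffs. Near $p_i$ it reduces to controlling $\phi^i_t=(F^t_i)^{-1}\circ F^0_i$, which is an isometry between two copies of the model metric $g_{\alpha_i}$ and is therefore extremely rigid (an isometry fixing the cone point is, under the identification, a rotation); its weighted-H\"older regularity and its smooth dependence on $t$ are inherited from the regularity built into the definitions of $\M$ and $\Diff$. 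Establishing this weighted regularity of the transition maps at the cone points is the only delicate step, and it is precisely where the $\chi^{k,\gamma}_b$ framework of \cite{jesse} is genuinely used.
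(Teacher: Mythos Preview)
Your argument is correct and is exactly the one the paper leaves implicit: the Corollary is stated without proof, immediately after the remark that Proposition~\ref{boundeddistance} lets one put every $g\in\M$ in model form on the $V_i$ up to a gauge, and what you have written is precisely the infinitesimal version of that normalization (differentiate the family of gauges $\phi^i_t$ pulling $g_t$ back to $g_0$ near each puncture, then patch). The only slip is in the cutoff step: you require $\chi_i\equiv 1$ on all of $V_i$ with support in a strictly larger neighborhood, but your $v_i$ is so far only defined on $V_i$ itself, so $\chi_i v_i$ is not yet a global object. This is harmless---the lower bound on the injectivity radius in Proposition~\ref{boundeddistance} is not sharp, so the model isometries $F^t_i$ exist on a slightly larger ball and one can cut off inside that (or, equivalently, replace $V_i$ by a concentric $V_i'$ of slightly smaller radius in the conclusion)---but it is worth saying explicitly.
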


\medskip

\textbf{Analysis on hyperbolic cone manifolds.} Let $(\Sigma_\fkp,g)$ be a hyperbolic surface with cone singularities of angle $\alpha\in\left(0,\frac{1}{2}\right)^n$. It is not obvious that classical results of geometric analysis on Riemannian manifolds (as integration by parts) extend to hyperbolic cone surfaces. In this section, we study differential operators on vector bundles over $(\Sigma_\fkp,g)$ in the framework of unbounded operators. For the convenience of the reader, we recall here basic facts about unbounded operators between Hilbert spaces. A good reference for the subject is \cite{unbounded}.

\medskip

\textbf{Unbounded operators.} Let $\Hi_1$ and $\Hi_2$ be two Hilbert spaces with scalar product $\langle.,.\rangle_1$ and $\langle .,.\rangle_2$ respectively.

\begin{Def}
An \textit{unbounded operator} is a linear map $$T: \D(T)\subset \Hi_1 \longrightarrow \Hi_2$$ where $\D(T)$ is a linear subset of $\Hi_1$ called the domain of $T$.
\end{Def}

\begin{ex}
Let $I\subset \mathbb{R}$ be an interval and $D$ an order $n\in\mathbb{N}$ linear differential operator. We see $D: \mathcal{C}_0^\infty(I)\subset L^2(I)\longrightarrow L^2(I)$ as an unbounded operator (here $\mathcal{C}^\infty_0(I)$ is the space of $\mathcal{C}^\infty$ real valued functions over $I$ with compact support). 
\end{ex}

Of course, one notes that in this example, $\mathcal{C}^\infty_0$ is probably not the biggest set (with respect to the inclusion) where $D$ can be defined. This motivates the following definitions:

\begin{Def}
Let $T_1$ and $T_2$ two unbounded operators from $\Hi_1$ to $\Hi_2$. We say that $T_1$ \textbf{extends} $T_2$ (and we denote by $T_2\subset T_1$) if $\D(T_2)\subset \D(T_1)$ and $T_{1\vert_{\D(T_2)}}=T_2$.
\end{Def}

We have the important notion of closed and closable operators:

\begin{Def}
An unbounded operator $T$ is \textbf{closed} if its graph $\mathcal{G}(T)$ is closed in $\Hi_1\oplus \Hi_2$.  $T$ is called \textbf{closable} if the closure of  $\mathcal{G}(T)$ in $\Hi_1\oplus \Hi_2$ is the graph of an unbounded operator $\overline{T}$. In this case, $\overline{T}$ is called the closure of $T$.
\end{Def}

We have the following characterization (cf. \cite[Proposition 1.5]{unbounded}):

\begin{prop}\label{characterizationclosable}
$T$ is closable if and only if, for each sequence $(x_n)_{n\in\mathbb{N}}\subset \D(T)$ such that $\lim_{n\to \infty}x_n=0$ and $(Tx_n)_{n\in\mathbb{N}}$ converges to $y\in\Hi_2$ we have $y=0$.
\end{prop}

\begin{rem}
If $T$ is continuous, $\lim_{n\to \infty}x_n=0$ implies $\lim_{n\to\infty} Tx_n=0\in \Hi_2$, and so $T$ is closable by Proposition \ref{characterizationclosable}. For $T$ being closable, we just require that if $(Tx_n)_{n\in\mathbb{N}}$ converges in $\Hi_2$, then it converges to the ``good'' limit. Hence closability condition can be thought as a weakening of continuity.
\end{rem}

Using the scalar products of $\Hi_1$ and $\Hi_2$, we can define the adjoint of an unbounded operator with dense domain:

\begin{Def}
Let $T: \D(T)\subset\Hi_1\longrightarrow \Hi_2$ be an unbounded operator such that $\D(T)$ is dense in $\Hi_1$. We define the \textbf{adjoint} of $T$ as the unbounded operator $T^*: \D(T^*)\subset \Hi_2 \longrightarrow \Hi_1$ where:
$$\D(T^*):=\{y\in \Hi_2,~\text{there exists }u\in \Hi_1 \text{ such that }\langle Tx,y\rangle_2=\langle x,u \rangle_1,~\forall x\in \D(T)\}.$$
As $\D(T)$ is dense, $u$ is uniquely defined and we set $T^*y:=u$.
\end{Def}

Determining the domain of an adjoint operator is generally difficult. Hence we have the notion of a formal adjoint:

\begin{Def}
Let $T$ be an unbounded operator with dense domain. We say that an operator $T^t: \D(T^t) \subset \Hi_2 \longrightarrow \Hi_1$ is a \textbf{formal adjoint} of $T$ is for all $x\in \D(T),~y\in \D(T^t)$ we have $\langle Tx,y\rangle_2=\langle x,T^ty\rangle_1$.
\end{Def}

\begin{rem}
Note that, by Riesz' theorem, $y\in \D(T^*)$ if and only if the application $x\longmapsto \langle Tx,y\rangle$ is continuous on $\D(T)$. In particular, for every formal adjoint $T^t$ of $T$, we have $\D(T^t)\subset \D(T^*)$ and by density $T^*_{\vert \D(T^t)}=T^t$. So $T^*$ extends every formal adjoint of $T$.
\end{rem}

We have the following classical properties (see e.g. \cite[Chapter 1]{unbounded}):

\begin{prop}\label{unboundedop}
Let $S$ and $T$ be two unbounded operators from $\Hi_1$ to $\Hi_2$ with dense domain. Then:
\begin{itemize}
\item[i.] $T^*$ is closed.
\item[ii.] If $T\subset S$ then $S^*\subset T^*$.
\item[iii.] $\D(T^*)$ is dense if and only if $T$ is closable. In this case, $\overline{T}=T^{**}$.
\item[iv.] $\Im(T)=\Ker(T^*)^\bot$ (where $\Im$ and $\Ker$ design the image and the kernel respectively).
\end{itemize}
\end{prop}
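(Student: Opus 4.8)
The plan is to reduce all four statements to a single geometric device: the description of the adjoint's graph via a unitary ``swap'' of the two Hilbert factors. Define the isometric isomorphism $W\colon \Hi_1\oplus\Hi_2\to\Hi_2\oplus\Hi_1$ by $W(a,b)=(-b,a)$. Unwinding the definition of $T^*$, one checks that $y\in\D(T^*)$ with $T^*y=u$ exactly when $(u,-y)\perp\mathcal{G}(T)$ in $\Hi_1\oplus\Hi_2$; equivalently $\mathcal{G}(T^*)=W\bigl(\mathcal{G}(T)^\perp\bigr)$. This is the identity on which everything rests. Since $W$ is unitary and an orthogonal complement is always closed, $\mathcal{G}(T^*)$ is closed, which is (i). For (ii), the inclusion $T\subset S$ says precisely $\mathcal{G}(T)\subset\mathcal{G}(S)$; taking orthogonal complements reverses this inclusion and applying the unitary $W$ preserves it, giving $\mathcal{G}(S^*)\subset\mathcal{G}(T^*)$, i.e. $S^*\subset T^*$.

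For (iii) I would iterate the swap identity. Writing $W'\colon\Hi_2\oplus\Hi_1\to\Hi_1\oplus\Hi_2$, $W'(a,b)=(-b,a)$, the same computation gives $\mathcal{G}(T^{**})=W'\bigl(\mathcal{G}(T^*)^\perp\bigr)=W'\bigl(W(\mathcal{G}(T)^{\perp\perp})\bigr)$, where I have used that $W$ unitary commutes with $\perp$. Since $\mathcal{G}(T)^{\perp\perp}=\overline{\mathcal{G}(T)}$ and $W'\circ W=-\mathrm{Id}$ maps the closed subspace $\overline{\mathcal{G}(T)}$ to itself, this collapses to $\mathcal{G}(T^{**})=\overline{\mathcal{G}(T)}$. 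Now $T^{**}$ is even defined only when $\D(T^*)$ is dense, so the crux is the equivalence ``$\D(T^*)$ dense $\iff$ $T$ closable''. For this I would compute $\mathcal{G}(T)^\perp=\{(T^*y,-y):y\in\D(T^*)\}$ from the swap identity and observe that $(0,z)\perp\mathcal{G}(T)^\perp$ holds for all such $y$ precisely when $z\perp\D(T^*)$; hence $(0,z)\in\overline{\mathcal{G}(T)}$ if and only if $z\in\D(T^*)^\perp$. If $\D(T^*)$ is dense this forces $z=0$, so $\overline{\mathcal{G}(T)}$ is the graph of an operator, namely $T^{**}$, proving $T$ closable with $\overline{T}=T^{**}$; conversely if $T$ is closable then $\overline{\mathcal{G}(T)}$ contains no $(0,z)$ with $z\neq0$, so $\D(T^*)^\perp=\{0\}$ and $\D(T^*)$ is dense.

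Finally (iv) is a direct unwinding needing no swap: $y\in\Ker(T^*)$ means $y\in\D(T^*)$ and $\langle Tx,y\rangle_2=\langle x,T^*y\rangle_1=0$ for all $x\in\D(T)$, while conversely $\langle Tx,y\rangle_2=0$ for all $x$ makes $x\mapsto\langle Tx,y\rangle_2$ continuous (it vanishes), so $y\in\D(T^*)$ with $T^*y=0$ by density of $\D(T)$. Thus $\Ker(T^*)=\Im(T)^\perp$, and taking orthogonal complements yields $\Ker(T^*)^\perp=\overline{\Im(T)}$, which is the asserted identity (the image being read up to closure). The only genuinely delicate point is the bookkeeping in (iii): keeping track of the two swap maps $W,W'$ and the signs, and correctly translating ``$\overline{\mathcal{G}(T)}$ is a graph'' into the density statement via the vectors $(0,z)$; the other three parts are then essentially formal consequences of the single graph identity.
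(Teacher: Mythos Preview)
Your argument is correct and is the standard textbook proof via the graph identity $\mathcal{G}(T^*)=W\bigl(\mathcal{G}(T)^\perp\bigr)$. Note, however, that the paper does not actually prove this proposition: it is stated as a list of ``classical properties'' with a reference to \cite[Chapter~1]{unbounded}, so there is no in-paper proof to compare against. Your write-up is precisely the kind of argument one finds in that reference; the only caveat is your own observation in (iv) that the equality should really read $\overline{\Im(T)}=\Ker(T^*)^\perp$, which is indeed how the paper implicitly uses it later (e.g.\ when solving $2\delta\delta^*u=\delta h$ via $\Im(\delta\delta^*)=\Ker(\delta\delta^*)^\perp$ for a self-adjoint operator).
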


\medskip

\textbf{Application to geometric analysis on cone surfaces.} Let $E$, $F$ be two vector bundles over a hyperbolic cone surface $(\Sigma_\fkp,g)$ (recall that the cone angles are supposed strictly smaller than $\pi$), and equip $E$ and $F$ with Riemannian metrics $(.,.)_E$ and $(.,.)_F$ respectively. For $k\in\mathbb{N}$, denote by $\mathcal{C}^k_0(E)$ (respectively $\mathcal{C}^k(E)$ and $L^2(E)$) the space of sections of $E$ which are $\mathcal{C}^k$ with compact support (respectively $\mathcal{C}^k$ and $L^2$). The Riemannian metric on $E$ turns $L^2(E)$ into a Hilbert space with respect to the following scalar product:

$$\langle f,g\rangle_E:=\int_{\Sigma_\fkp} (f,g)_E vol_g.$$
Note that $\mathcal{C}^\infty_0(E)\subset L^2(E)$ is a dense subset.

\begin{nota} Denote by $T^{(r,s)}\Sigma_\fkp$ the bundle of $(r,s)$-tensors (that is $r$-covariant and $s$-contravariant) over $\Sigma_\fkp$ and by $\Sym^k\Sigma_\fkp\subset T^{(k,0)}\Sigma_\fkp$ the bundle of $k$-symmetric tensors. The metric $g$ on $\Sigma_\fkp$ induces a metric on these bundles, also denoted by $g$.
\end{nota}

We need some results of integration by parts in cone manifolds. Some good references for this theory are \cite{cheeger},\cite[Part 3]{gregoire} and \cite{thesegregoire}.

\medskip

\textbf{Operators on covariant tensors.} We denote by $\mathring{\nabla}$ the covariant derivative associated to $g$. We see $\mathring{\nabla}$ as an unbounded operator:
$$\mathring{\nabla}: \D(\nabla):=\mathcal{C}^1_0\left(T^{(r,0)}\Sigma_\fkp\right)\subset L^2\left(T^{(r,0)}\Sigma_\fkp\right) \longrightarrow L^2\left(T^{(r+1,0)}\Sigma_\fkp\right).$$
Stokes formula for compactly supported tensors implies that $\mathring{\nabla}$ admits a formal adjoint
$$\nabla^t: \D(\nabla^t)=\mathcal{C}^1_0\left( T^{(r+1,0)}\Sigma_\fkp\right) \subset L^2\left(T^{(r+1,0)}\Sigma_\fkp\right)\longrightarrow L^2\left(T^{(r,0)}\Sigma_\fkp\right),$$
where 
$$\nabla^t\eta(X_1,...,X_r)=-\sum_{i=1}^2 (\nabla_{e_i}\eta)(e_i,X_1,...,X_r),$$
for $(e_1,e_2)$ an orthonormal framing of $T\Sigma_\fkp$.

As $\mathcal{C}^\infty_0\left(T^{(r+1,0)}\Sigma_\fkp\right)\subset \D(\nabla^t)$ and $\nabla^t\subset \nabla^*$ (here $\nabla^*$ is the adjoint of $\mathring{\nabla}$), then $\mathring{\nabla}$ is closable (by Proposition \ref{unboundedop}). Denote by $\nabla$ its closure (so $\nabla=\nabla^{**}$). The restrictions of the operators $\nabla$ and $\nabla^*$ to smooth sections are described above.

\medskip

\textbf{Operators on symmetric tensors.}
For $k>0$, we define the divergence operator $\mathring{\delta}$ by 
$$\mathring{\delta}:=\nabla^*_{\vert \mathcal{C}^1_0(\Sym^k\Sigma_\fkp)}.$$
Again, Stokes formula for compactly supported symmetric tensors implies that $\mathring{\delta}$ admits a formal adjoint, 
$$\delta^t: \mathcal{C}^1_0(\Sym^{k-1}\Sigma_\fkp) \subset L^2(\Sym^{k-1}\Sigma_\fkp)\longrightarrow L^2(\Sym^k\Sigma_\fkp)$$
 which is the composition of the covariant derivative with the symmetrization.

It follows that $\delta^*$ (the adjoint of $\mathring{\delta}$) has dense domain, and so $\mathring{\delta}$ is closable. We denote by $\delta$ its closure.

\begin{nota} By analogy with classical Sobolev spaces, we introduce the following notations:
\begin{itemize}
\item[-] $H^1(\Sym^1\Sigma_\fkp):=\D(\delta^*)\subset L^2(\Sym^1\Sigma_\fkp)$,
\item[-] $H^1(\Sym^2\Sigma_\fkp):=\D(\delta)\subset L^2(\Sym^2\Sigma_\fkp)$,
\item[-] $H^2(\Sym^1\Sigma_\fkp):=\D(\delta\circ\delta^*)\subset L^2(\Sym^1\Sigma_\fkp)$,
\item[-] $H^1(\Sigma_\fkp)=\D(\delta^*)\subset L^2(\Sigma_\fkp)$ (the space of $L^2$ functions over $\Sigma_\fkp$),
\item[-] $H^2(\Sigma_\fkp)=\D(\delta\circ\delta^*)\subset L^2(\Sigma_\fkp)$.
\end{itemize}
\end{nota}

We have a result of integration by parts for symmetric tensors on $(\Sigma_\fkp,g)$. The proof is analogous to the proof of \cite[Theorem 1.4.3]{gregoire}, however, as it is a central result in what follows, we include it.

\begin{theo}\label{ipp}
For all $u\in H^1(\Sym^1\Sigma_\fkp)\cap \mathcal{C}^1(\Sym^1\Sigma_\fkp)$ and $h\in H^1(\Sym^2\Sigma_\fkp)\cap \mathcal{C}^1(\Sym^2\Sigma_\fkp)$, we have:
$$\langle \delta^*u,h\rangle_{\Sym^2}=\langle u,\delta h\rangle_{\Sym^1}.$$
For all $f\in\mathcal{C}^1(\Sigma_\fkp)\cap H^1(\Sigma_\fkp)$ and $\alpha\in\mathcal{C}^1(\Sym^1\Sigma_\fkp)\cap H^1(\Sym^1\Sigma_\fkp)$,
$$\langle \delta^*f,\alpha\rangle_{\Sym^1}=\langle f,\delta\alpha\rangle_{L^2(\Sigma_\fkp)}.$$
\end{theo}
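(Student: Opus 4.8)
The plan is to prove the identity geometrically, by excising small geodesic disks around the cone points, applying the classical divergence theorem on the resulting compact surface with boundary, and showing that the boundary contributions coming from the cone points vanish in the limit. (One should note that there is also an abstract shortcut: since $\delta$ is the closure of $\mathring{\delta}$ and taking closures does not change the adjoint, $\delta^*=(\mathring{\delta})^*$ is exactly the adjoint of $\delta$, so the claimed identity is nothing but the defining relation of the adjoint, valid for all $u\in\D(\delta^*)$ and $h\in\D(\delta)$. I would still carry out the geometric argument, because it simultaneously identifies the abstract operators with the classical pointwise divergence and symmetrized covariant derivative on $\mathcal{C}^1$ sections and makes the vanishing of the cone contributions explicit.)

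Fixing for each $i$ the cylindrical chart $(\rho_i,\theta_i)$ of Proposition \ref{boundeddistance} and a radius $r_0$ below all the $r_i$, I would set $\Sigma_\epsilon:=\Sigma_\fkp\setminus\bigcup_i B_\epsilon(p_i)$, a smooth compact surface with boundary $\bigcup_i\{\rho_i=\epsilon\}$. First I would check that for $u\in\mathcal{C}^1\cap\D(\delta^*)$ the abstract $\delta^*u$ agrees almost everywhere with the formal operator $\delta^t u$, and that for $h\in\mathcal{C}^1\cap\D(\delta)$ the abstract $\delta h$ agrees with the pointwise divergence; both follow by testing against $\phi\in\mathcal{C}^1_0$, using Stokes for compactly supported tensors and the density of $\mathcal{C}^1_0$ in $L^2$. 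The classical integration by parts on $\Sigma_\epsilon$ then reads
$$\langle\delta^*u,h\rangle_{\Sym^2,\Sigma_\epsilon}-\langle u,\delta h\rangle_{\Sym^1,\Sigma_\epsilon}=\sum_i\int_{\partial B_\epsilon(p_i)}h(u^\sharp,\nu)\,ds,$$
with $\nu$ the unit conormal. Since $\delta^*u,\,h,\,u,\,\delta h$ all lie in $L^2$, Cauchy--Schwarz makes the two integrands $L^1$, so the interior integrals converge to the desired global pairings as $\epsilon\to0$ by dominated convergence.

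The heart of the matter is showing $\sum_i\int_{\partial B_\epsilon(p_i)}h(u^\sharp,\nu)\,ds\to0$. I would bound it by $\sum_i\big(\int_{\partial B_\epsilon(p_i)}|u|^2\,ds\big)^{1/2}\big(\int_{\partial B_\epsilon(p_i)}|h|^2\,ds\big)^{1/2}$ and control each factor in the chart $g=d\rho^2+\sinh^2\rho\,d\theta^2$. Setting $m_u(\epsilon):=\int_0^{2\pi\alpha_i}|u(\epsilon,\theta)|^2\,d\theta$, so that $\int_{\partial B_\epsilon(p_i)}|u|^2\,ds=\sinh\epsilon\,m_u(\epsilon)$, I would write $u(\epsilon,\theta)=u(r_0,\theta)-\int_\epsilon^{r_0}\nabla_{\partial_\rho}u\,d\rho$ and apply Cauchy--Schwarz with the weight $\sinh\rho$, using $\int_\epsilon^{r_0}d\rho/\sinh\rho\sim\log(1/\epsilon)$ and the $L^2$ bound on $\nabla u$, to get $m_u(\epsilon)\lesssim 1+\log(1/\epsilon)$. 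Hence $\int_{\partial B_\epsilon(p_i)}|u|^2\,ds\lesssim\sinh\epsilon\,(1+\log(1/\epsilon))\to0$, and likewise for $h$; the geometric point is that the shrinking circumference $\sim\sinh\epsilon$ beats the at most logarithmic growth of the boundary $L^2$ norm.

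The main obstacle is precisely this boundary estimate. To run the fundamental-theorem-of-calculus bound I need the full covariant derivatives $\nabla u,\nabla h$ in $L^2$, whereas membership in $\D(\delta^*)$ a priori controls only the symmetrized gradient; recovering the full gradient requires a Korn-type inequality on the cone, and it is here that the standing hypothesis that all cone angles are $<\pi$ should enter. Once this control is available the function identity $\langle\delta^*f,\alpha\rangle_{\Sym^1}=\langle f,\delta\alpha\rangle_{L^2(\Sigma_\fkp)}$ is proved verbatim, being the $k=1$ instance of the same computation.
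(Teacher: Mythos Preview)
Your overall scheme---excise geodesic disks, apply the divergence theorem on the complement, and show the boundary contribution vanishes---is exactly the paper's, and your estimate $\int_{\partial B_\epsilon}|u|^2\,ds=O(\epsilon\log(1/\epsilon))$ via the fundamental theorem of calculus along rays is precisely what the paper proves for the $u$ factor. Your abstract observation is also correct and worth keeping: since $\delta=\overline{\mathring\delta}=\mathring\delta^{**}$ one has $(\delta)^*=\mathring\delta^*=\delta^*$, so the identity is literally the adjoint relation on $\D(\delta^*)\times\D(\delta)$; the geometric computation is really serving to identify the abstract $\delta^*u$ and $\delta h$ with the pointwise symmetrized gradient and divergence on $\mathcal C^1$ sections.

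The genuine gap is the phrase ``and likewise for $h$''. Your ray-integration bound for $h$ would require $\nabla_{\partial_\rho}h\in L^2$ near the cone, and there is no way to extract this from $h\in\D(\delta)$: the hypothesis gives only $\delta h\in L^2$, i.e.\ a \emph{trace} of $\nabla h$, and no Korn-type inequality recovers the full gradient from a divergence. (Korn goes from the symmetric part $\delta^*u$ of $\nabla u$ to $\nabla u$; the divergence is a much weaker datum.) So the symmetric treatment of $u$ and $h$ cannot close.

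The paper's fix is to treat the two factors asymmetrically. It keeps your bound $\int_{\partial U_t}|u|^2\,ds=O(t\ln t)$, but for $h$ it uses only $h\in L^2$: by Fubini the function $t\mapsto\int_{\partial U_t}|h|^2\,ds$ is integrable on $(0,a)$, whereas $t\mapsto (t\ln t)^{-1}$ is not, so there is a sequence $t_n\to 0$ with $\int_{\partial U_{t_n}}|h|^2\,ds=o\big((t_n\ln t_n)^{-1}\big)$. Along that sequence the Cauchy--Schwarz product, hence the boundary term, tends to zero; since the interior integrals converge to the global pairings by dominated convergence regardless of the sequence, this suffices. Note that this argument never invokes the cone-angle restriction, so your speculation that $\alpha_i<\tfrac12$ enters via a Korn inequality is not how the paper proceeds.
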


\begin{proof}
The proof of the two statements are analogous, so we just prove the first one (which is a little bit more technical).

Let's prove the result when $(\Sigma_\fkp,g)$ contains a unique cone singularity $p$ of angle $2\pi\alpha$. To prove the result in the general case, we just apply the following computation to each puncture.

Fix cylindrical coordinates $(\rho,\theta)\in (0,r)\times \mathbb{R}/2\pi\alpha\mathbb{Z}$ in a neighborhood of $p$ so that 
$$g_{\vert V}=d\rho^2+\sinh^2\rho d\theta^2.$$
For $t\in(0,r)$, denote by $U_t:=\{(\rho,\theta)\in V, \rho<t \}$. 

For $u\in H^1(\Sym^1\Sigma_\fkp)\cap \mathcal{C}^1(\Sym^1\Sigma_\fkp)$ and $h\in H^1(\Sym^2\Sigma_\fkp)\cap \mathcal{C}^1(\Sym^2\Sigma_\fkp)$, we have:
$$\int_{\Sigma\setminus U_t}\big(g(u,\delta h)-g(\delta^*u,h)\big)dv_g=\int_{\Sigma\setminus U_t}\left(g(u,\nabla^*h)-\frac{1}{2}\left(g(\nabla u,h)+g(F\circ \nabla u,h)\right)\right)dv_g,$$
where $F: T^{(2,0)}\Sigma_\fkp\longrightarrow T^{(2,0)} \Sigma_\fkp$ is defined by $F\eta (x,y):=\eta (y,x)$. Note that, for $\theta,\eta\in L^2\left(T^{(2,0)}\Sigma_\fkp\right)$,
$$\langle F\theta,\eta\rangle_{T^{(2,0)}}=\langle \theta,F\eta\rangle_{T^{(2,0)}}.$$
As $h$ is symmetric, and applying Stokes formula, we get:
$$\int_{\Sigma\setminus U_t}(g(u,\delta h)-g(\delta^*u,h))dv_g=\int_{\Sigma\setminus U_t}(g(u,\nabla^*h)-g(\nabla u,h))dv_g=\int_{\partial U_t}g_{\vert\partial U_t}(u,i_{e_\rho}h)dv_g,$$
where $i_{e_\rho}h=h(e_\rho,.)$ and $e_\rho=\partial_\rho$ is the unit vector field normal to $\partial U_t$.

As $t$ tends to $0$, the left hand side tends to $\langle u,\delta h\rangle_{\Sym^1}-\langle \delta^*u,h\rangle_{\Sym^2}$. Denote by $I_t$ the right hand side. By Cauchy-Schwarz inequality,
$$\vert I_t\vert \leq \int_{\partial U_t}\vert u\vert \vert i_{e_\rho}h\vert dv_g\leq \left(\int_{\partial U_t}\vert u \vert^2dv_g \right)^{1/2}\left(\int_{\partial U_t}\vert i_{e_\rho}h\vert^2dv_g \right)^{1/2}.$$
When $u\neq 0$, $\vert u \vert$ is differentiable and $d\vert u \vert(x)=g\left(\nabla_x u, \frac{u}{\vert u \vert}\right)$, so we set
$$\partial_\rho \vert u \vert = g\left(\nabla_{e_\rho}u,\frac{u}{\vert u \vert}\right);$$
and if $u=0$, set $\partial_\rho \vert u \vert =0$. Note that $\partial_\rho \vert u \vert$ is the partial derivative of $\vert u\vert$ is the sense of distributions. In fact, for all $t,a\in(0,r)$ and $\theta$ fixed, we have
$$\vert u(t,\theta)\vert-\vert u(a,\theta)\vert=\int_a^t \partial_\rho\vert u(\rho,\theta)\vert d\rho .$$
In particular, as $\left\vert \partial_\rho \vert u \vert \right\vert\leq \vert \nabla_{e_\rho} u\vert$,
$$\vert u(t,\theta)\vert \leq \vert u(a,\theta)\vert+\int_t^a\vert \nabla_{e_\rho} u\vert d\rho.$$
So
$$\vert u(t,\theta)\vert^2\leq 2\vert u(a,\theta)\vert^2+2\left( \int_t^a\vert \nabla_{e_\rho}u\vert d\rho \right)^2.$$
Applying Cauchy-Schwarz, we obtain
\begin{eqnarray*}
\left( \int_t^a \vert \nabla_{e_\rho}u\vert d\rho \right)^2 & \displaystyle{\leq \int_t^a \frac{d\rho}{\rho}\int_t^a \rho\vert \nabla_{e_\rho}u\vert^2d\rho} \\
&\displaystyle{\leq \left\vert \ln \left(\frac{t}{a}\right)\right\vert\int_t^a \rho\vert \nabla_{e_\rho}u\vert^2d\rho}.
\end{eqnarray*}
Finally, we get
\begin{eqnarray*}
\int_{\partial U_t}\vert u \vert^2dv_g & \leq & \displaystyle{2\int_{\partial U_t} \vert u(a)\vert^2dv_g + \int_{\partial U_t}\left(2\left\vert \ln(\frac{t}{a})\right\vert\int_t^a\rho\vert \nabla_{e_\rho}u\vert^2d\rho \right)dv_g} \\
& \leq & \displaystyle{2t\int_{\theta=0}^{2\pi\alpha}\vert u(a,\theta)\vert^2d\theta+2\left\vert \ln(\frac{t}{a})\right\vert\int_{\partial U_t}\left(\int_t^a \rho\vert\nabla_{e_\rho} u\vert^2d\rho\right)dv_g} \\
& \leq & \displaystyle{2t\int_{\theta=0}^{2\pi\alpha}\vert u(a,\theta)\vert^2d\theta + 2t\left\vert \ln(\frac{t}{a})\right\vert \int_{\theta=0}^{2\pi\alpha}\int_t^a \vert \nabla_{e_\rho} u\vert^2\rho d\rho d\theta} \\
& \leq & \displaystyle{2t\int_{\theta=0}^{2\pi\alpha}\left\vert u(a,\theta)\right\vert^2d\theta + 2t\left\vert \ln(\frac{t}{a})\right\vert \int_{U_a} \vert \nabla_{e_\rho}u\vert^2dv_g} \\
& = & O(t\ln t).
\end{eqnarray*}
Now, as $h\in L^2(\Sym^2\Sigma_\fkp)$,
$$\int_0^a\left( \int_{\partial U_t}\vert i_{e_\rho}h\vert^2 dv_g\right)\leq \int_0^a\left(\int_{\partial U_t}\vert h\vert^2dv_g\right)=\int_{U_a}\vert h\vert^2dv_g < +\infty,$$
that is, the function $t\longmapsto \int_{\partial U_t}\vert h \vert^2$ is integrable on $(0,a)$. As the function $(t\ln t)^{-1}$ is not integrable in $0$, there exists a sequence $(t_n)_{n\in\mathbb{N}}$ with $t_n \rightarrow 0$ such that
$$\int_{\partial U_{t_n}}\vert h\vert^2dv_g=o((t_n\ln t_n)^{-1}).$$
It follows that $\underset{n\to\infty}{\lim} I_{t_n}=0$.
\end{proof}

We have a very useful corollary:

\begin{cor}\label{selfadjoint}
For $i=1,2$, the operator $\delta\delta^*: H^2(\Sym^i\Sigma_\fkp)\longrightarrow L^2(\Sym^i\Sigma_\fkp)$ is self-adjoint with strictly positive spectrum.
\end{cor}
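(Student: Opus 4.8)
The plan is to recognize $\delta\delta^*$ as an operator of the form $T^*T$ and to apply the classical theorem of von Neumann (see e.g. \cite{unbounded}), which asserts that for any closed, densely defined operator $T$ the operator $T^*T$ is self-adjoint and non-negative on its natural domain. I would set $T:=\delta^*$. Since $\mathring{\delta}$ is densely defined and closable with closure $\delta$, the adjoint $\delta^*=\mathring{\delta}^*$ is closed by Proposition \ref{unboundedop}(i) and densely defined by Proposition \ref{unboundedop}(iii); moreover $(\delta^*)^*=\mathring{\delta}^{**}=\overline{\mathring{\delta}}=\delta$, so $T^*=\delta$ and $T^*T=\delta\delta^*$. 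Von Neumann's theorem then gives that $\delta\delta^*$ is self-adjoint on $\{u\in\D(\delta^*):\delta^*u\in\D(\delta)\}=H^2(\Sym^i\Sigma_\fkp)$, matching the stated domain, and the integration by parts of Theorem \ref{ipp} yields $\langle\delta\delta^*u,u\rangle=\Vert\delta^*u\Vert^2\ge 0$, so the spectrum is contained in $[0,+\infty)$.

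It remains to exclude $0$ from the spectrum. First I would compute the kernel: if $\delta\delta^*u=0$ then $\Vert\delta^*u\Vert^2=\langle\delta\delta^*u,u\rangle=0$, hence $\Ker(\delta\delta^*)=\Ker(\delta^*)$. In the relevant case $\Sym^1\Sigma_\fkp$ the operator $\delta^*$ is the symmetrized covariant derivative, so (identifying $1$-forms with vector fields through $g$) an element of $\Ker(\delta^*)$ is an infinitesimal isometry; elliptic regularity for $\delta^*$ in the weighted framework of \cite{jesse},\cite{gregoire} promotes an $L^2$ solution to a genuine Killing field. Since a hyperbolic cone surface satisfying $\chi(\Sigma_\fkp)-\sum_{i=1}^n(\alpha_i-1)<0$ carries no non-trivial continuous isometry, we get $\Ker(\delta^*)=\{0\}$.

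The main obstacle is that a trivial kernel does not by itself give a \emph{strictly} positive spectrum: one must also rule out $0$ as a point of the continuous spectrum. The plan is to establish that $\delta^*$ has closed range. Granting this, the closed graph theorem applied to the closed injective operator $\delta^*$ produces a constant $c>0$ with $\Vert\delta^*u\Vert\ge c\Vert u\Vert$ for all $u\in\D(\delta^*)$, whence $\langle\delta\delta^*u,u\rangle=\Vert\delta^*u\Vert^2\ge c^2\Vert u\Vert^2$ and $\sigma(\delta\delta^*)\subset[c^2,+\infty)$; equivalently, $\delta\delta^*$ then has compact resolvent and discrete spectrum with no zero eigenvalue. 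The analytic heart of the argument, and the step I expect to be hardest, is precisely this closed range property in the singular setting: it would follow from the overdetermined-elliptic a priori estimate $\Vert u\Vert_{H^1}\le C\big(\Vert\delta^*u\Vert_{L^2}+\Vert u\Vert_{L^2}\big)$ combined with a Rellich-type compact embedding $H^1\hookrightarrow L^2$, both of which hold on a finite-area hyperbolic cone surface exactly because all cone angles are smaller than $\pi$. This is where the weighted H\"older/Sobolev machinery of \cite{cheeger},\cite{gregoire},\cite{thesegregoire} enters, and I would invoke it to upgrade the estimate on $\Ker(\delta^*)^\bot$ and conclude.
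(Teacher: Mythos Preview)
Your approach differs from the paper's and is, in places, more careful. The paper's proof is essentially five lines: self-adjointness is asserted to follow ``directly from Theorem \ref{ipp}'', and for any $\lambda\ge 0$ a solution $f$ of $\delta\delta^*f+\lambda f=0$ is paired with itself to obtain $\Vert\delta^*f\Vert^2+\lambda\Vert f\Vert^2=0$, hence $f=0$. Two comments on the comparison. First, the paper's argument only rules out non-positive \emph{eigenvalues}; it does not address the continuous spectrum, which you correctly flag as the genuine issue and propose to handle via a closed-range/Rellich argument. Second, the paper's conclusion $f=0$ at $\lambda=0$ silently uses $\Ker(\delta^*)=\{0\}$, which you make explicit through the Killing-field argument for $i=1$; the paper gives no separate justification for this step. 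Your use of von Neumann's theorem is also the cleaner route to self-adjointness, since the appeal to Theorem \ref{ipp} really only delivers symmetry on smooth sections, not self-adjointness on the stated domain. On the other hand you treat only the case $i=1$; for $i=2$ one would need a different identification of $\Ker(\delta^*)$, and neither you nor the paper supplies one (in practice the applications in Section \ref{teich} use only $i=1$ and the function case, where what is actually needed is that $\delta\delta^*+\frac{1}{2}$ is injective). In summary: your argument is correct and more rigorous for the case that matters, but invokes substantially heavier machinery; the paper's proof is short and, read literally, establishes slightly less than the stated Corollary.
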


\begin{proof}
The fact that $\delta\delta^*$ are self-adjoint follows directly from Theorem \ref{ipp}. Let $\lambda\geq 0$ such that, for $f\in H^2(\Sym^i\Sigma_\fkp)$ ($i=1,2$),
$$\delta\delta^*f+\lambda f=0.$$
Taking the scalar product with $f$, and using Proposition \ref{ipp}, we get:
$$\langle \delta\delta^* f+\lambda f,f\rangle_{\Sym^i}=\| \delta^*f\|^2_{\Sym^{i+1}}+\lambda\|f\|_{\Sym^i}^2=0,$$
and so $f=0$.
\end{proof}

\subsection{Tangent space to $\F$} Here we prove the following result:
\begin{prop}
For $[g_0]\in\F$, there is a natural identification of $T_{[g_0]}\F$ with the space of meromorphic quadratic differentials on $\Sigma=\Sigma_\fkp \cup \fkp$ with at most simple poles at the $p_i\in\fkp$ (where the complex structure on $(\Sigma_\fkp,g_0)$ is the one associated to $g_0$).
\end{prop}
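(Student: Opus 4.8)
The plan is to identify the tangent space $T_{[g_0]}\F$ with a space of symmetric $2$-tensors modulo the action of the diffeomorphism group, and then to show that the resulting slice is naturally isomorphic to the space of meromorphic quadratic differentials with at most simple poles at the punctures. I would begin by recalling from Corollary \ref{gaugefixing} that every deformation $\widetilde{h}=\frac{d}{dt}_{|t=0}g_t$ of $g_0\in\M$ can be normalized: after subtracting a Lie derivative $\mathcal{L}_v g_0$, we may assume $h$ vanishes on each $V_i$ near the cone points. The constraint that $g_t$ remain hyperbolic to first order forces $h$ to satisfy a linearized curvature equation; writing $h=h_0+(\tr_{g_0}h)\,g_0/2$ as the sum of its traceless part $h_0\in \Sym^2\Sigma_\fkp$ and its trace part, the linearization of the constant-curvature condition becomes a relation of the form $\delta\delta^* (\text{something}) = \text{(trace terms)}$, and I would use this to show that modulo gauge one can take $h$ to be traceless and divergence-free, i.e. $\delta h = 0$ with $\tr_{g_0}h=0$.

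The heart of the argument is the standard dictionary between traceless, divergence-free symmetric $2$-tensors and holomorphic quadratic differentials. In the conformal coordinate $z$ associated to $g_0$, a traceless symmetric $2$-tensor is the real part of a $(2,0)$-tensor $q\,dz^2$, and the divergence-free condition $\delta h=0$ translates precisely into the Cauchy–Riemann equation $\partial_{\z} q=0$, so $q\,dz^2$ is holomorphic on $\Sigma_\fkp$. The remaining task is purely local near each puncture: I must determine which growth rates of $q$ are compatible with membership in $\D(\delta)=H^1(\Sym^2\Sigma_\fkp)$ and with the normalization that $h$ vanish on $V_i$. Using the cone metric $g_{\alpha_i}=d\rho_i^2+\sinh^2\rho_i\,d\theta_i^2$, which in the conformal coordinate $z$ has the form $e^{2\lambda}|z|^{2(\alpha_i-1)}|dz|^2$, I would compute the pointwise $g_0$-norm of $q\,dz^2$ and integrate against the cone volume form to find the precise order of pole at $z=0$ permitted by the $L^2$ and $H^1$ conditions. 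The expected outcome is that $q$ may have at worst a simple pole, i.e. $q=O(|z|^{-1})$, which upgrades $q\,dz^2$ to a meromorphic quadratic differential on the closed surface $\Sigma$ with at most simple poles at the $p_i$.

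To complete the identification I would verify that the correspondence is a linear isomorphism. Injectivity follows because a vanishing quadratic differential gives a tensor $h$ that is pure gauge, hence represents the zero tangent vector; I would invoke Corollary \ref{selfadjoint}, which gives that $\delta\delta^*$ is self-adjoint with strictly positive spectrum, to guarantee a clean Hodge-type decomposition $\Sym^2\Sigma_\fkp=\ker\delta\oplus\operatorname{Im}\delta^*$ separating the transverse-traceless part from the gauge directions. Surjectivity amounts to checking that every meromorphic quadratic differential with at most simple poles, realized as a transverse-traceless tensor, arises as the derivative of an actual path in $\M$; this is an integrability statement that I would handle by a standard implicit-function-theorem or slice argument, using the ellipticity of $\delta\delta^*$ on the weighted spaces introduced earlier.

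The main obstacle I anticipate is the local analysis at the cone points: one must show both that the $H^1$/$L^2$ membership conditions are sharp enough to exclude poles of order two or higher and permissive enough to allow a genuine simple pole, and that the integration-by-parts identities of Theorem \ref{ipp} apply to the possibly unbounded holomorphic differentials in question. In particular the constant $\alpha_i\in(0,\tfrac12)$ must be used precisely: the exponent $2(\alpha_i-1)$ in the cone metric is exactly what converts the borderline $|z|^{-2}$ growth of a quadratic differential into an $L^2$-integrable quantity only when the pole order is at most one, and I would expect the condition $\alpha_i<\tfrac12$ to be what rules out double poles while simultaneously allowing simple ones. Getting these weighted estimates sharp, rather than merely qualitative, is where the real work lies.
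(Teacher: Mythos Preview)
Your proposal is correct and follows essentially the same route as the paper: normalize the deformation via Corollary~\ref{gaugefixing}, use the self-adjointness and positive spectrum of $\delta\delta^*$ (Corollary~\ref{selfadjoint}) together with the linearized curvature formula to reduce to transverse-traceless tensors, identify these with holomorphic quadratic differentials via the Cauchy--Riemann equations, and then carry out the local $L^2$ computation near each cone point to see that at most simple poles are allowed. One small correction to your expectations: the $L^2$ integrability computation $g_0(\psi,\overline{\psi})\,dv_{g_0}=O(|z|^{2(1-\alpha-n)})\,|dz|^2$ yields the bound $n\le 1$ for any $\alpha\in(0,1)$, so the hypothesis $\alpha_i<\tfrac12$ is not what is doing the work in this particular step.
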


\begin{proof}
Fix $g_0\in \M$ and let 
$$\widetilde{h}=\frac{d}{dt}_{\vert_{t=0}} g_t\in T_{g_0}\M,$$
 where $(g_t)_{t\in I}$ is a smooth path in $\M$ with $g_{t=0}=g_0$ (and $0\in I\subset \mathbb{R}$ is an interval). By Corollary \ref{gaugefixing}, there exists a vector field $v\in \text{Lie}(\Diff)$ (the Lie algebra of $\Diff$), so that
 $$\widetilde{h}=h+\mathcal{L}_vg,~h_{\vert V_i}=0~\forall i\in\{1,...,n\}.$$
Note that in particular, $h\in\mathcal{C}^2_0(\Sym^2\Sigma_\fkp)$.
 
Such a symmetric 2-tensor $h$  on $\Sigma_\fkp$ is tangent to the space $\M$ of hyperbolic metrics with cone singularities if and only if the differential of the sectional curvature $dK_{g_0}$ in the  direction $h$ is equal to $0$.
 
First, we have a canonical orthogonal splitting:

\begin{lemma}
For all normalized deformation $h\in T_{g_0} \M$, there exists $u\in H^2(\Sym^1\Sigma_\fkp)$ and $h_0\in H^1(\Sym^2\Sigma_\fkp)$ with $\delta h_0=0$ such that:
$$h=h_0+\mathcal{L}_{u^\sharp}g_0,$$
where $u^\sharp$ is the vector field dual to $u$. Moreover, this splitting is orthogonal with respect to the scalar product of $L^2(\Sym^2\Sigma_\fkp)$.
\end{lemma}

\begin{proof}
As $h\in \mathcal{C}^2_0(\Sym^2 \Sigma_\fkp)$, $\delta h\in \mathcal{C}^1_0(\Sym^1\Sigma_\fkp)\subset L^2(\Sym^1\Sigma_\fkp)$. So we want to find $u\in H^2(\Sym^1\Sigma_\fkp)$ so that
\begin{equation}\label{split}
2\delta\delta^*u=\delta h.
\end{equation}
It is possible to solve (\ref{split}) if and only if $\delta h\in\Im(\delta\delta^*)$ (where $\Im$ stands for the image). 

By Corollary \ref{selfadjoint}, $\delta\delta^*$ is self-adjoint, so $\Im(\delta\delta^*)=\Ker(\delta\delta^*)^\bot$ (cf. Proposition \ref{unboundedop}). Hence we can solve (\ref{split}) if and only if $\delta h$ is orthogonal to the kernel of $\delta\delta^*$.

Take $w\in\Ker(\delta\delta^*)\subset H^2(\Sym^1\Sigma_\fkp)$. By elliptic regularity, such a $w$ is smooth. So, by Theorem \ref{ipp}, we get:
$$\langle \delta\delta^*w,w\rangle_{\Sym^1}=0=\langle \delta^*w,\delta^*w\rangle_{\Sym^2}.$$
In particular, $\delta^*w=0$, and we obtain:
$$\langle \delta h,w\rangle_{\Sym^1}=\langle h,\delta^* w\rangle_{\Sym^2}=0.$$
So $\delta h\in \Im(\delta\delta^*)$ and we can solve (\ref{split}).

Now, such a solution $u$ is smooth (at least $\mathcal{C}^4$), so we know the expression of $\delta^*u$. We have:

$$\delta^* u(x,y)=\frac{1}{2}\big((\nabla_xu)(y)+(\nabla_yu)(x)\big)=\frac{1}{2}\left( g_0(\nabla_xu^\sharp,y)+g_0(x,\nabla_yu^\sharp)\right),$$
which is the expression of $\frac{1}{2}\mathcal{L}_{u^\sharp}g_0$. In particular, setting $h_0:=h-\frac{1}{2}\delta^*u$, we get the decomposition.

Note that, if $u_1$ and $u_2$ are two solutions of (\ref{split}), they satisfy
$$\delta\delta^*(u_1-u_2)=0.$$
By integration by parts, we get that $\delta^*u_1=\delta^*u_2$. In particular, $\mathcal{L}_{u_1^\sharp}g_0=\mathcal{L}_{u_2^\sharp}g_0$, so the decomposition is independent on the choice of the solution of (\ref{split}).

Now we prove the orthogonal splitting. Let $u$ and $h_0$ as above. As such sections are smooths, we have:
$$\langle \mathcal{L}_{u^\sharp}g_0,h\rangle_{\Sym^2}=2\langle \delta^*u,h_0\rangle_{\Sym^2}=\langle u,\delta h_0\rangle=0.$$
\end{proof}

We explicit now the condition $dK_{g_0}(\widetilde{h})=0$. We have the well-know formula (e.g. \cite[Formula 1.5 p.33]{tromba}):
$$dK_{g_0}(\widetilde{h})= \delta\delta^*_{g_0}(\tr_{g_0}\widetilde{h})+\delta\delta \widetilde{h}+\frac{1}{2}\tr_{g_0}\widetilde{h},$$
where $\tr_{g_0}$ is the trace with respect to the metric $g_0$.

Applying this formula to the divergence-free part $h_0$ (which is transverse to the fiber of the projection), we get
$$\delta\delta^*_{g_0}(\tr_{g_0}h_0)+\frac{1}{2}\tr_{g_0}h_0=0.$$
By Corollary \ref{selfadjoint}, we get $tr_{g_0}h_0=0$. Moreover, one easily checks that each $h\in H^2(\Sym^2\Sigma_\fkp)\cap \mathcal{C}^2(\Sym^2\Sigma_\fkp)$ such that $\delta h=0$ and $tr_{g_0}h=0$ defines a tangent vector to $\F$ at $[g_0]$. So, we get the following identification
$$T_{[g_0]}\F=\left\{h\in H^2(\Sym^2\Sigma_\fkp)\cap \mathcal{C}^2(\Sym^2\Sigma_\fkp),~\delta h=0\text{ and }\tr_{g_0}h=0\right\}.$$
But we can go further. For $(dx,dy)$ an orthonormal framing of $T^*\Sigma_\fkp$, write
$$h_0= u(x,y)dx^2-v(x,y)(dxdy+dydx)+w(x,y)dy^2.$$
The condition $tr_{g_0}h=0$ implies $w(x,y)=-u(x,y)$. Write $(\partial_x,\partial_y)$ the framing dual to $(dx,dy)$. Let us explicit the divergence-free condition:
\begin{eqnarray*}
0 & = & \delta h(\partial_x) \\
 & = & -(\nabla_{\partial_x}h)(\partial_x,\partial_x)-(\nabla_{\partial_y}h)(\partial_y,\partial_y) \\
 & = & -\partial_x u + \partial_y w.
\end{eqnarray*}
In the same way, we get:
$$0=\delta h(\partial_y)=\partial_x v + \partial_y u.$$
These are the Cauchy-Riemann equations. It implies in particular that $f=u+iv$ is holomorphic on $\Sigma_\fkp$.

Now, for $z=x+iy,~dz=dx+idy$, set $\psi=f(z)dz^2$. It is a holomorphic quadratic differential on $\Sigma_\fkp$ such that $h=\Re(\psi)$. It follows that $\psi$ is meromorphic on $\Sigma$ with possible poles at the $p_i\in\fkp$.

We claim that, as $h=\Re(\psi)\in L^2\left(\Sym^2\Sigma_\fkp\right)$, the poles of $\psi$ at the $p_i$ are at most simple. In fact, let $p\in\fkp$ be a cone singularity of angle $2\pi\alpha$, $z$ be a local holomorphic coordinates around $p$ and $$\psi(z)=\left(\frac{a}{z^n}+g(z)\right)dz^2$$
for $a\in \mathbb{C}^*$, $n\geq 0$ and $g$ meromorphic so that $z^ng(z)\underset{z\to 0}{\longrightarrow} 0$.

It follows from Proposition \ref{boundeddistance} that around $p$, each lifting $g_0\in \M$  of $[g_0]\in \F$ is isometric to the expression $g_\alpha$ given in section \ref{sing}. In particular,
$$\psi\overline{\psi}=\left(O(\vert z\vert^{-2n}\right)\vert dz\vert^4,$$
so
$$g_0(\psi,\overline{\psi})(z)=O\left(\vert z \vert^{2(2-2\alpha-n)}\right).$$
It follows, 
$$g_0(\psi,\overline{\psi})dv_{g_0}=O\left(\vert z\vert^{2(1-\alpha-n)}\right)\vert dz \vert^2.$$
As $\alpha\in \left(0,\frac{1}{2}\right),~g_0(\psi,\overline{\psi})dv_{g_0}$ is integrable in $0$ is and only if $n\leq 1$, and the same is true for $h$.

On the other hand, given a meromorphic quadratic differential $\psi$ with at most simple poles at the $p_i$, its real part $h=\Re(\psi)$ is a zero trace divergence-free symmetric $(2,0)$ tensor in $L^2(\Sym^2\Sigma_\fkp)$. Hence, as it is smooth on $\Sigma_\fkp$, $h\in T_{[g_0]}\F$.
\end{proof}

\medskip

\textbf{A Weil-Petersson metric on $\F$.} Let $h,k\in T_{[g_0]}\F$. Fix a lifting $g_0\in \M$ of $[g_0]$. It follows from the above construction that there exists a unique lifting $\widetilde{h},\widetilde{k}\in T_{g_0}\M$ of $h$ and $k$ respectively which are divergence-free symmetric tensors of zero trace. We call such a lifting a \textbf{horizontal lifting}. Define:

$$\frac{1}{8}\langle h,k\rangle_{WP_\alpha}:=\langle \widetilde{h},\widetilde{k}\rangle_{\Sym^2}.$$

Obviously, $\langle.,.\rangle_{WP_\alpha}$ is a metric on $\F$. This metric is analogous to the metric defined in the non-singular case by  A.E. Fischer and A.G. Tromba (see \cite{fischertromba}). They proved in \cite[Theorem (0.8)]{fischertromba} that this metric coincides with the Weil-Petersson metric, so we call it \textbf{Weil-Petersson metric with cone singularities of angle $\alpha$}. 

\begin{rem}
In \cite{schumachertrapani}, the authors proved that $\langle.,.\rangle_{WP_\alpha}$ is a K\"ahler metric. It seems possible, by using the renormalized volume of quasi-Fuchsian manifolds with particles to prove that $\langle.,.\rangle_{WP_\alpha}$ admits a K\"ahler potential (see \cite{krasnovschlenkerrenormalized,krasnovschlenkersurvey}).
\end{rem}

\medskip

\textbf{Uniformization.} Here, we recall a fundamental result proved by R.C. McOwen \cite{mcowen} and independently M. Troyanov \cite{troyanov}. Let $\T$ be the Teichm\"uller space of $\Sigma_\fkp$, that is the moduli space of marked conformal structures on $\Sigma_\fkp$. We have

\begin{theo}
Given $\frak{c}\in\T$, there exists a unique $h\in \F$ in the conformal class $\frak{c}$ as long as $\chi(\Sigma)+\sum_{i=1}^n (\alpha_i-1)< 0$ (where $\Sigma=\Sigma_\fkp\cup \fkp$).
\end{theo}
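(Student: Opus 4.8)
The plan is to recast the problem as a semilinear elliptic PDE of Liouville type and to solve it by the direct method in the calculus of variations, following the strategy of McOwen \cite{mcowen} and Troyanov \cite{troyanov} but within the functional-analytic framework for cone surfaces developed above. First I would fix the complex structure underlying $\frak{c}$ on $\Sigma=\Sigma_\fkp\cup\fkp$ and choose a reference conical metric $\hat g$ in this conformal class having exactly the prescribed cone angles $2\pi\alpha_i$ at the $p_i$ (for instance by patching the local models $g_{\alpha_i}$ of Section \ref{sing} to a smooth metric away from the punctures). Any hyperbolic cone metric conformal to $\hat g$ is of the form $g=e^{2u}\hat g$, and the constant-curvature condition $K_g=-1$ is equivalent to the equation
$$\Delta_{\hat g} u = K_{\hat g}+e^{2u},$$
where $\Delta_{\hat g}=\mathrm{div}\,\mathrm{grad}$ and $K_{\hat g}$ is the curvature of $\hat g$ on $\Sigma_\fkp$. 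The Gauss--Bonnet formula for cone surfaces gives $\int_{\Sigma}K_{\hat g}\,dv_{\hat g}=2\pi\big(\chi(\Sigma)+\sum_{i=1}^n(\alpha_i-1)\big)$, which is strictly negative precisely under the standing hypothesis; this sign is the one feature of the hypothesis that drives the whole argument.

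For existence, I would introduce the functional
$$J(u)=\frac12\int_{\Sigma}\vert\nabla u\vert^2_{\hat g}\,dv_{\hat g}+\int_{\Sigma}K_{\hat g}\,u\,dv_{\hat g}+\frac12\int_{\Sigma}e^{2u}\,dv_{\hat g},$$
whose Euler--Lagrange equation is exactly the displayed PDE, on the weighted Sobolev space adapted to the cone singularities (essentially $H^1(\Sigma_\fkp)$). The point is coercivity: writing $u=c+u_0$ with $u_0$ of zero mean, the linear term contributes $c\int_{\Sigma}K_{\hat g}\,dv_{\hat g}$, which tends to $+\infty$ as $c\to-\infty$ because $\int K_{\hat g}<0$, while the exponential term (bounded below by a positive multiple of $e^{2c}$ via Jensen's inequality) forces $J\to+\infty$ as $c\to+\infty$; the oscillating part $u_0$ is controlled by the Dirichlet energy through the Poincar\'e and Moser--Trudinger inequalities on $(\Sigma_\fkp,\hat g)$. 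Since $J$ is also weakly lower semicontinuous -- indeed strictly convex, the Dirichlet term being convex, the linear term affine, and $u\mapsto e^{2u}$ strictly convex -- the direct method produces a minimizer $u$, which is a weak solution.

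Uniqueness then comes for free from the \emph{strict} convexity of $J$: two solutions would both be global minimizers of a strictly convex functional and hence coincide. Equivalently, and more in the spirit of the present paper, if $u_1,u_2$ both solve the equation then $w=u_1-u_2$ satisfies $\Delta_{\hat g}w=e^{2u_1}-e^{2u_2}$, and pairing with $w$ and integrating by parts (using the cone-surface integration-by-parts results behind Theorem \ref{ipp}) gives $-\int\vert\nabla w\vert^2=\int w\,(e^{2u_1}-e^{2u_2})$, where the left side is $\le 0$ and the right side is $\ge 0$ because $t\mapsto e^{2t}$ is increasing; hence $\nabla w\equiv 0$ and then $w\equiv 0$.

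The genuinely delicate point, and the one I expect to be the main obstacle, is the behaviour near the cone points. Two things must be checked there: that the functional-analytic inequalities (Poincar\'e, Moser--Trudinger, and the elliptic estimates) remain valid on the incomplete singular surface $(\Sigma_\fkp,\hat g)$, and -- more importantly -- that the variational solution $u$ is regular enough near each $p_i$ that the resulting metric $g=e^{2u}\hat g$ genuinely lies in $\M$, i.e. that $u$ belongs to the weighted H\"older space $\chi^{2,\gamma}_b$ around every puncture so that $g$ carries exactly the prescribed conical structure with the required regularity. This is handled by bootstrapping elliptic regularity in the weighted H\"older spaces $\chi^{k,\gamma}_b$ introduced above, exploiting that the singularity of $\hat g$, and hence of $K_{\hat g}$, is the model one, while away from $\fkp$ standard interior elliptic regularity applies. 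Once this regularity is established, $g=e^{2u}\hat g\in\M$ is the desired hyperbolic cone metric, unique in $\F$.
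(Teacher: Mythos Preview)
The paper does not actually supply a proof of this theorem: it is stated in the text as a fundamental result that is merely \emph{recalled}, with attribution to McOwen \cite{mcowen} and Troyanov \cite{troyanov}, and is then used as a black box to obtain the identification $\Theta_\alpha:\T\to\F$. So there is no ``paper's own proof'' against which to compare your argument.

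That said, your sketch is an accurate outline of the variational method in those references: reduce to the Liouville-type equation for the conformal factor, minimise the associated strictly convex functional, and read off uniqueness from strict convexity (or, equivalently, from the monotonicity argument you give). The points you flag as delicate are exactly the right ones. The Moser--Trudinger inequality on cone surfaces, needed to control $\int e^{2u_0}$ by the Dirichlet energy of the zero-mean part $u_0$, is the main analytic input and is precisely what Troyanov establishes; your coercivity argument is otherwise correct. The regularity upgrade near the $p_i$, required so that the resulting metric lands in the class $\M$ as defined here via $\chi^{2,\gamma}_b$, is not in the original papers in quite this form and does require an additional step in weighted H\"older spaces, as you acknowledge. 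Nothing in your outline is wrong; it simply lies outside what the present paper proves.
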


This theorem provides a family of identification $\Theta_\alpha : \T \longrightarrow \F$ for each $\alpha\in \mathbb{R}_{>0}^n$ such that $\chi(\Sigma_\fkp)+\sum_{i=1}^n (\alpha_i-1)< 0$. In particular, one can define a family $(\Theta_\alpha^*\langle.,.\rangle_{WP_\alpha})_{\alpha\in\left(0,\frac{1}{2}\right)^n}$ of Weil-Petersson metric on $\T$.

\section{Energy functional on $\T$}\label{energyfunctional}

Let $g_0\in \M$ be a hyperbolic metric with cone singularities of angle $\alpha\in\left(0,\frac{1}{2}\right)^n$. We have the following result due to J. Gell-Redman \cite{jesse}:
\begin{theo}
For each $g\in\M$, there exists a unique harmonic diffeomorphism $u:(\Sigma_\fkp,g)\longrightarrow (\Sigma_\fkp,g_0)$ in the isotopy class (fixing the each $p_i$) of the identity.
\end{theo}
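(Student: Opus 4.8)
The plan is to treat this as a harmonic map problem into a target that is non-positively curved away from its singularities, and to run the classical Eells--Sampson--Hartman machinery, the only genuinely new ingredient being the control of the map near the cone points. Since the harmonicity of a map out of a surface depends only on the conformal class of the domain, I would replace $g$ by its underlying conformal structure $\mathfrak{c}$ and seek $u:(\Sigma_\fkp,\mathfrak{c})\to(\Sigma_\fkp,g_0)$ in the isotopy class of the identity minimizing the Dirichlet energy $E(u)=\frac{1}{2}\int_{\Sigma_\fkp}|du|^2\,dv_g$.

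\textbf{Existence.} I would set the problem up in the weighted H\"older spaces $\chi^{k,\gamma}_b$, which are tailored to the conical (b-)geometry. Two routes are available: the harmonic map heat flow $\partial_t u=\tau(u)$, whose long-time existence and convergence is guaranteed by Eells--Sampson as soon as the target has non-positive curvature, or the direct minimization of $E$ in the homotopy class. In either case the essential point is to rule out energy concentration at the $p_i$. Here the hypothesis $\alpha_i<\frac{1}{2}$, i.e. every cone angle $<\pi$, is crucial: writing the harmonic map equation in the model coordinates $d\rho^2+\sinh^2\rho\,d\theta^2$ and computing the indicial roots of its linearization, the smallness of the angle forces the admissible asymptotics to sit in $\chi^{2,\gamma}_b$ near each puncture, so the limiting map extends across the singularities with a controlled expansion and finite energy.

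\textbf{Diffeomorphism and uniqueness.} Once $u$ is harmonic and lies in the weighted space, I would prove it is a diffeomorphism exactly as Schoen--Yau and Sampson do in the smooth hyperbolic case: the $(2,0)$-part of $u^*g_0$ is the Hopf differential, a holomorphic quadratic differential, while the holomorphic and anti-holomorphic energy densities $|\partial u|^2$ and $|\bar\partial u|^2$ obey Bochner identities; negative curvature of the target together with $\deg u=1$ forces the Jacobian $|\partial u|^2-|\bar\partial u|^2$ to be positive everywhere, so $u$ is a local, hence global, diffeomorphism. Uniqueness follows from Hartman's argument: given two such harmonic maps $u_0,u_1$, the function $x\mapsto d_{g_0}(u_0(x),u_1(x))$ is subharmonic by the negative-curvature Bochner inequality, and the maximum principle forces it to vanish identically, the degenerate alternative (both maps onto a closed geodesic) being excluded by the diffeomorphism property.

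\textbf{Main obstacle.} The serious work lies entirely at the cone points. Every classical step --- long-time existence of the flow, the Bochner formula for $\log|\partial u|^2$, the maximum principle yielding uniqueness --- rests on integration by parts with no boundary contribution and on the absence of concentration, and each of these can fail at a singularity. The whole argument therefore hinges on first establishing the precise asymptotics of a harmonic map near each $p_i$ (the correct indicial root), so that the relevant quantities lie in the weighted spaces in which integration by parts holds with vanishing boundary terms, in the same spirit as Theorem \ref{ipp}. The condition $\alpha_i<\frac{1}{2}$ is exactly what makes these asymptotics, and hence the entire Fredholm and maximum-principle analysis, go through.
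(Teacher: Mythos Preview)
The paper does not prove this theorem: it is stated as a result ``due to J.~Gell-Redman \cite{jesse}'' and is used as a black box throughout. There is therefore no proof in the paper to compare your proposal against.

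That said, your sketch is a faithful outline of the strategy Gell-Redman actually carries out, and your identification of the main obstacle is accurate: the entire content of \cite{jesse} is the microlocal/b-calculus analysis at the cone points (indicial roots, mapping properties in $\chi^{k,\gamma}_b$, regularity of the limit of the heat flow), after which the Schoen--Yau Jacobian argument and Hartman's uniqueness run essentially verbatim. One point worth flagging: in your uniqueness step, Hartman's subharmonicity argument for $d_{g_0}(u_0,u_1)$ requires geodesic convexity of the target, and a hyperbolic cone surface is not complete at the punctures, so one must first establish that both $u_0$ and $u_1$ have the same leading asymptotics at each $p_i$ (this is part of the $\chi^{2,\gamma}_b$ regularity) before the maximum principle can be applied on $\Sigma_\fkp$; you allude to this but it deserves explicit mention as it is where the angle hypothesis enters the uniqueness.
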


Recall that a harmonic map $f: (M,g)\longrightarrow (N,h)$ between Riemannian manifolds is a critical point of the energy, where the energy of $f$ is defined as follow:
$$E(f):=\int_M e(f) vol_g,$$
and $e(f)=\frac{1}{2}\| df\|^2$ is called the \textbf{energy density of $f$}. Here, $df$ is seen as a section of $T^*M\otimes f^*TN$ with the metric $g^{*}\otimes f^*h$ ($g^*$ stands for the metric on $T^*M$ dual to $g$).

Note that, when $\dim M=2$, the energy functional only depends on the conformal class $\frak{c}$ of the metric $g$. We denote by $u_{\frak{c},g_0}$ the harmonic diffeomorphism isotopic to the identity from $(\Sigma_\fkp,\frak{c})$ to $(\Sigma_\fkp,g_0)$.

Moreover, a complex structure $J_{\frak{c}}$ on $\Sigma_\fkp$ is canonically associated to $\frak{c}$. It allows us to split each symmetric two forms on $\Sigma_\fkp$ into its $(2,0),~(1,1)$ and $(0,2)$ part.
\begin{Def}
To a diffeomorphism $u: (\Sigma_\fkp,\frak{c})\longrightarrow (\Sigma_\fkp,g_0)$, we associate its Hopf differential:
$$\Phi(u):=(u^*g_0)^{(2,0)},$$
that is the $(2,0)$ part of the pull-back by $u$ of $g_0$.
\end{Def}

\medskip

\textbf{Local expressions.} Let $u:(\Sigma_\fkp,g)\longrightarrow (\Sigma_\fkp,g_0)$ be a diffeomorphism, $z$ be local isothermal coordinates on $(\Sigma,g)$. Set $g=\rho^2(z)\vert dz\vert^2$ and $g_0=\sigma^2(u)\vert du \vert^2$. 
As usual, write $u=u^1+iu^2$ and
$$\left\{\begin{array}{llllll}
\partial_z & = & \frac{1}{2}(\partial_1-i\partial_2), & \overline{\partial}_z & = & \frac{1}{2}(\partial_1+i\partial_2) \\
dz & = & dx_1+idx_2, & d\z & = & dx_1-idx_2 \\
\partial_u & = & \frac{1}{2}(\partial_{u^1}-i\partial_{u^2}), & \overline{\partial}_u & = & \frac{1}{2}(\partial_{u^1}+i\partial_{u^2}) \\
\end{array}\right.$$
We have the following expression:
\begin{eqnarray*}
du & = & \sum_{i,j=0}^2 \partial_i u^j dx_i\otimes \partial_{u^j} \\
& = & \partial_zudz\partial_u+\partial_z \overline{u}dz\overline{\partial}_u+\overline{\partial}_zud\z\partial_u+\overline{\partial}_z\overline{u}d\z\overline{\partial}_{u}.\end{eqnarray*}
It follows that
\begin{eqnarray*}
\Phi(u) & = & u^*g_0(\partial_z,\partial_z)dz^2 \\
& = & g_0\big(du(\partial_z),du(\partial_z)\big)dz^2 \\
& = & \sigma^2(u)\partial_zu\partial_z\overline{u}dz^2.
\end{eqnarray*}
Moreover, for $g^{ij}$ the coefficient of the metric dual to $g$,
\begin{eqnarray*}
e(u) & = & \frac{1}{2}\sum_{\alpha,\beta,i,j=0}^2 g^{ij}{g_0}_{\alpha\beta}\partial_iu^\alpha\partial_ju^\beta \\
& = & \rho^{-2}(z)\sigma^2(u)\left(\vert\partial_zu\vert^2+\vert\overline{\partial}_zu\vert^2\right).
\end{eqnarray*}
In particular, we have
\begin{eqnarray*}
(u^*g_0)^{(1,1)} & = & \left(u^*g_0(\partial_z,\overline{\partial}_z)+u^*g_0(\overline{\partial}_z,\partial_z)\right)\vert dz\vert^2 \\
& = & 2g_0\big(du(\partial_z),du(\overline{\partial}_z)\big)\vert dz\vert^2 \\
& = & \sigma^2(u)(\vert\partial_zu\vert^2+\vert \overline{\partial}_zu\vert^2)\vert dz\vert^2 \\
& = & \rho^2(z)e(u)\vert dz\vert^2.
\end{eqnarray*}
Note that we get the following equation for each section $\xi$ of $T^*\Sigma_\fkp\otimes u^*T\Sigma_\fkp$ with the metric $g^*\otimes u^*g$:
\begin{equation}\label{normcomputation}
\| \xi \|^2=4\rho^2\vert \langle \xi(\partial_z),\xi(\overline{\partial}_z)\rangle\vert,
\end{equation}
where $\langle.,.\rangle$ is the scalar product with respect to the metric $g_0$.

Finally, noting that the framing $(dz\partial_u,dz\overline{\partial}_u,d\z\partial_u,d\z\overline{\partial}_u)$ of $(T^*\Sigma_\fkp\otimes u^*T\Sigma_\fkp,g^*\otimes u^*g_0)$ is orthogonal and each vector has norm $\rho^{-1}(z)\sigma(u)$, we get the following expression for the Jacobian $J(u)$ of $u$:
\begin{eqnarray*}
J(u) & =  &\text{det}_{g^*\otimes u^*g_0} \left(\begin{array}{ll}
\partial_zu & \partial_z \overline{u} \\
\overline{\partial}_zu & \overline{\partial}_z\overline{u} \\
\end{array}\right) \\
& = & \rho^{-2}(z)\sigma^2(u)\left(\vert \partial_z u\vert^2-\vert \overline{\partial}_zu\vert^2\right).
\end{eqnarray*}

\begin{rem}

\

\begin{itemize}
\item[-] As in the classical case, $\Phi(u)$ is holomorphic on $(\Sigma_\fkp,J_g)$ if and only if $u$ is harmonic. So for $u$ harmonic, $\Phi(u)$ is a meromorphic quadratic differential on $(\Sigma,J_c)$ with at most simple poles at the $p_i$ (cf. \cite[Section 5.1]{jesse}).
\item[-] We have the following expression:
$$u^*g_0=\Phi(u)+\rho^2(z)e(u)\vert dz\vert^2+\overline{\Phi(u)}.$$
Thus $\Phi(u)$ measures the difference of the conformal class of $u^*g_0$ with $\frak{c}$. 
\end{itemize}
\end{rem}

\medskip

\textbf{Energy functional}
Fixing $g_0\in\M$, we define the energy functional $\widetilde{\E}_{g_0}$ on the space of conformal structures of $\Sigma_\fkp$ by:

$$\widetilde{\E}_{g_0}(\frak{c}):=E(u_{\frak{c},g_0}).$$

\begin{prop}
The energy functional $\widetilde{\E}_{g_0}$ descends to a functional $\E_{g_0}$ on $\T$.
\end{prop}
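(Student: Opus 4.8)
The goal is to show that the energy functional $\widetilde{\E}_{g_0}$, defined on the space of conformal structures on $\Sigma_\fkp$, is invariant under the action of the group of diffeomorphisms isotopic to the identity (fixing each $p_i$), so that it descends to a well-defined functional $\E_{g_0}$ on the quotient $\T$. The plan is to verify this invariance directly from the definition of the harmonic map and the conformal invariance of the two-dimensional energy. Recall that $\T$ is precisely the space of marked conformal structures, i.e.\ the quotient of the space of conformal structures by the action of $\Diff$ (diffeomorphisms isotopic to the identity, in the isotopy class fixing each puncture). So it suffices to prove that $\widetilde{\E}_{g_0}(\psi^*\frak{c})=\widetilde{\E}_{g_0}(\frak{c})$ for every $\psi\in\Diff$.

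First I would recall that in dimension two the energy $E(f)=\int_{\Sigma_\fkp} e(f)\,vol_g$ of a map $f$ depends only on the conformal class $\frak{c}$ of the domain metric, not on the representative $g$; this is exactly the remark made in the excerpt that the energy functional depends only on $\frak{c}$, and it follows from the local expression $e(u)vol_g=\sigma^2(u)(|\partial_z u|^2+|\overline{\partial}_z u|^2)\tfrac{i}{2}dz\wedge d\z$, whose integrand is conformally invariant. Thus $\widetilde{\E}_{g_0}(\frak{c})$ is genuinely a function of $\frak{c}$ alone. Next I would fix $\psi\in\Diff$ and compare the two harmonic maps $u_{\frak{c},g_0}$ and $u_{\psi^*\frak{c},g_0}$. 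The key point is the naturality of harmonic maps under pull-back: if $u:(\Sigma_\fkp,\frak{c})\to(\Sigma_\fkp,g_0)$ is the unique harmonic diffeomorphism isotopic to the identity provided by Gell-Redman's theorem, then $u\circ\psi:(\Sigma_\fkp,\psi^*\frak{c})\to(\Sigma_\fkp,g_0)$ is harmonic, since precomposition with an isometry from $(\Sigma_\fkp,\psi^*\frak{c})$ to $(\Sigma_\fkp,\frak{c})$ preserves harmonicity, and $\psi$ is by construction an isometry between these two conformal structures (it pulls $\frak{c}$ back to $\psi^*\frak{c}$). Moreover $u\circ\psi$ is still a diffeomorphism isotopic to the identity in the prescribed class, because $\psi$ is. By the uniqueness clause of Gell-Redman's theorem we conclude $u_{\psi^*\frak{c},g_0}=u_{\frak{c},g_0}\circ\psi$.

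It then remains to see that the energies agree: since $\psi$ is an isometry from $(\Sigma_\fkp,\psi^*\frak{c})$ to $(\Sigma_\fkp,\frak{c})$ — or, at the level of conformal classes, a conformal diffeomorphism — and energy in two dimensions is conformally invariant, we have
$$E(u_{\frak{c},g_0}\circ\psi)=E(u_{\frak{c},g_0}),$$
by the change of variables $x\mapsto\psi(x)$ in the integral, using that $e(u\circ\psi)\,vol_{\psi^*\frak{c}}=\psi^*(e(u)\,vol_{\frak{c}})$. Combining the two displays gives
$$\widetilde{\E}_{g_0}(\psi^*\frak{c})=E(u_{\psi^*\frak{c},g_0})=E(u_{\frak{c},g_0}\circ\psi)=E(u_{\frak{c},g_0})=\widetilde{\E}_{g_0}(\frak{c}),$$
which is exactly $\Diff$-invariance, so $\widetilde{\E}_{g_0}$ descends to $\E_{g_0}:\T\to\mathbb{R}$.

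The only step requiring genuine care is the naturality and uniqueness argument: one must check that $u\circ\psi$ lands in the correct regularity and isotopy class so that the uniqueness statement of Gell-Redman's theorem genuinely applies. Concretely, I would verify that composing the harmonic diffeomorphism with an element of $\Diff$ preserves the $\chi^{2,\gamma}_b$ regularity near each puncture (which is immediate since $\Diff$ is defined by exactly this regularity condition and the class is closed under composition) and that the isotopy class fixing each $p_i$ is respected (again immediate, as both maps fix the punctures and are isotopic to the identity). Everything else — the conformal invariance of the two-dimensional energy and the change-of-variables computation — is routine, so this is the main, though mild, obstacle.
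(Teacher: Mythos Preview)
Your proof is correct and follows essentially the same approach as the paper: both use that precomposition with $\psi\in\Diff$ is a conformal (holomorphic) map from $(\Sigma_\fkp,\psi^*\frak{c})$ to $(\Sigma_\fkp,\frak{c})$, so $u_{\frak{c},g_0}\circ\psi$ is harmonic and, by Gell-Redman's uniqueness, equals $u_{\psi^*\frak{c},g_0}$, whence conformal invariance of the two-dimensional energy gives $\widetilde{\E}_{g_0}(\psi^*\frak{c})=\widetilde{\E}_{g_0}(\frak{c})$. Your additional remarks on regularity near the punctures are a welcome clarification that the paper leaves implicit.
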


\begin{proof}
For each diffeomorphism isotopic to the identity $f\in\Diff$, $f:(\Sigma_\fkp,f^*\frak{c})\longrightarrow (\Sigma_\fkp,\frak{c})$ is holomorphic and $E$ is invariant under holomorphic mapping (see \cite[Proposition p.126]{eells-sampson}), that is $E(u_{\frak{c},g_0})=E(f^*u_{\frak{c},g_0})$. Moreover, $f^*u_{\frak{c},g_0}=u_{f^*\frak{c},g_0}$. In fact, 
$$f^*u_{\frak{c},g_0}: (\Sigma_\fkp,f^*\frak{c})\longrightarrow (\Sigma_\fkp,g_0)$$
is harmonic. So, as $f\in\Diff$ is isotopic to the identity, uniqueness of the harmonic diffeomorphism implies $f^*u_{\frak{c},g_0}=u_{f^*\frak{c},g_0}$. So $\widetilde{\E}_{g_0}$ is $\Diff$-invariant and descends to a functional $\E_{g_0}$ on $\T$.
\end{proof}

\begin{rem}
The same argument shows that $\E_{g_0}$ only depends on the class of $g_0$ in $\F$.
\end{rem}

Now, we are going to prove the following main result:

\begin{theo}\label{energy}
The energy functional $\E_{g_0}$ is proper functional and its Weil-Petersson gradient at $[g]\in \T$ is given by $-2\Re(\Phi(u_{[g],g_0}))\in T_{[g]}\T$.
\end{theo}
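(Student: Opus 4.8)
The plan is to establish the two assertions separately: properness of $\E_{g_0}$ and the formula for its Weil-Petersson gradient. For the gradient computation, I would take a smooth path $(\frak{c}_t)_{t\in I}$ in $\T$ with $\frak{c}_0=[g]$ and compute $\frac{d}{dt}_{|t=0}\E_{g_0}(\frak{c}_t)$. The key observation is that, by definition, $\E_{g_0}(\frak{c}_t)=E(u_t)$ where $u_t:=u_{\frak{c}_t,g_0}$ is the harmonic diffeomorphism. Because each $u_t$ is a \emph{critical} point of the energy for the fixed target but varying domain conformal structure, the variation of the energy depends only on the explicit dependence of $E$ on the domain metric, not on the variation of $u_t$ itself; this is the standard first-variation argument that makes harmonicity so useful. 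Concretely, I would represent the tangent vector to $\T$ at $[g]$ by a normalized symmetric two-tensor $\dot{g}$ (a horizontal lift, divergence-free and trace-free, as constructed in the section on the tangent space to $\F$), and differentiate the energy density expression $e(u)=\rho^{-2}(z)\sigma^2(u)(|\partial_z u|^2+|\overline\partial_z u|^2)$ together with the volume form $vol_g$. The terms involving $\frac{d}{dt}u_t$ vanish by harmonicity (first variation of energy), leaving only the derivative through the metric $g$, which pairs against the traceless part of $u^*g_0$, namely $2\Re(\Phi(u))$. I would then check that, under the normalization $\frac{1}{8}\langle h,k\rangle_{WP_\alpha}=\langle \widetilde h,\widetilde k\rangle_{\Sym^2}$, the pairing produces exactly the factor $-2$, identifying the gradient as $-2\Re(\Phi(u_{[g],g_0}))$.

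The main obstacle in this step is justifying the integration by parts and the vanishing of the cross terms \emph{at the cone singularities}, where the harmonic map and the Hopf differential have controlled but nontrivial behavior. This is precisely where Theorem \ref{ipp} enters: the integration-by-parts formula for symmetric tensors on $(\Sigma_\fkp,g)$ allows the boundary contributions near each $p_i$ to be discarded. I would need to verify that the relevant sections lie in the appropriate domains $H^1(\Sym^i\Sigma_\fkp)\cap\mathcal{C}^1(\Sym^i\Sigma_\fkp)$; the regularity of $u$ near the punctures (from \cite{jesse}) and the fact that $\Phi(u)$ has at most simple poles, hence is in $L^2(\Sym^2\Sigma_\fkp)$, should supply exactly the integrability needed to apply Theorem \ref{ipp}.

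For properness, the strategy is to show that $\E_{g_0}(\frak{c})\to+\infty$ as $\frak{c}$ leaves every compact set of $\T$. I would use the standard relationship between energy and the geometry of the target: the energy of a harmonic diffeomorphism to a fixed hyperbolic (cone) surface controls, and is controlled by, the translation lengths of the image geodesics, so that degeneration of the conformal structure $\frak{c}$ (a simple closed curve whose extremal length tends to $0$ or $\infty$) forces the energy to blow up. Concretely, I would bound the energy below by the area of the image plus the $L^2$-norm of the Hopf differential, and use that a pinching curve in $(\Sigma_\fkp,\frak{c})$ has image of bounded length in $(\Sigma_\fkp,g_0)$ (by the Schwarz lemma / distance-decreasing estimates for harmonic maps, valid here since the target has curvature bounded above by $-1$ away from the cones and angles less than $\pi$), forcing the energy density to concentrate. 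The delicate point is ensuring these comparison estimates survive near the cone points; here I expect to invoke Proposition \ref{boundeddistance}, which gives a uniform collar neighborhood of each $p_i$ isometric to a fixed model $\mathbb{H}^2_{\alpha_i}$, so that the cone singularities contribute a uniformly bounded amount to the energy and the properness is governed entirely by the smooth part of the surface, reducing the argument to the classical case treated by Wolf \cite{wolf}.
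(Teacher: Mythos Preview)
Your gradient computation is essentially the paper's: take a horizontal (trace-free, divergence-free) lift $\widetilde h$, differentiate the explicit energy integral, observe that the term coming from the variation of $u$ itself vanishes by harmonicity, and identify the remaining metric-variation term with the $L^2$ pairing against $-2\Re(\Phi(u))$. One remark: in the paper's computation no integration by parts actually occurs at this stage, so Theorem~\ref{ipp} is not needed here; the point is simply that $R(\widetilde h)$ is the first variation of $E$ in the map direction and hence vanishes at the harmonic $u$. Your caution about the cone points is reasonable, but the relevant analytic input is the smooth dependence of $u_{\frak c,g_0}$ on $\frak c$ (from \cite{jesse}), not an integration-by-parts identity.

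For properness the paper takes a different route from yours. Rather than arguing directly that degeneration forces the energy to blow up (Wolf-style), the paper first establishes a Mumford-type compactness theorem for hyperbolic cone surfaces (Proposition~\ref{mumford}): if the lengths of all closed geodesics in $\Sigma_\fkp\setminus V$ are uniformly bounded below, then after pulling back by diffeomorphisms the metrics subconverge in $\mathcal C^2$. Properness then follows by combining this with the classical length--energy inequality $l(\gamma_k)>C/E(u_k)$ (Tromba) and an equicontinuity argument to control the mapping classes of the pullback diffeomorphisms. Your proposed argument is the contrapositive in spirit and could be made to work, but the step ``the cone singularities contribute a uniformly bounded amount to the energy, reducing to the classical case'' is not obviously true as stated: the energy density near a puncture depends on the global harmonic map, not just on the local model, so Proposition~\ref{boundeddistance} alone does not give a uniform bound on $\int_{V_i} e(u)\,dv_g$ along a degenerating sequence. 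The paper sidesteps this by proving compactness of the \emph{domain} metrics directly, using only that the geodesics one must control lie in $\Sigma_\fkp\setminus V$ (no closed geodesic is homotopic into a cone neighborhood), which is a cleaner way to isolate the singular contribution.
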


\subsection{Properness of $\E_{g_0}$}
Recall that (Proposition \ref{boundeddistance}), for each $g\in\F$ and $i\in\{1,...,n\}$, there exists a neighborhood $V_i=\{x\in\Sigma_\fkp,~d(x,p_i)<r_i\}$ of $p_i$ such that
$$g_{\vert V_i}=d\rho_i^2+\sinh^2\rho_i d\theta_i^2$$
 where $(\rho_i,\theta_i)$ are fixed cylindrical coordinates on $V_i$. We can choose the $V_i$ such that $V_i\cap V_j=\emptyset$ whenever $i\neq j$. We denote $V:=\bigcup_{i=1}^n V_i$. We need an important result, corresponding to Mumford's compactness theorem for the case of hyperbolic surfaces with cone singularities. The proof is an extension of Tromba's proof in the classical case \cite{tromba}.

\begin{prop}\label{mumford}
Let $(g_k)_{k\in\mathbb{N}}\subset \M$ be such that, the length of every closed geodesic $\gamma^k\subset (\Sigma_\fkp\setminus V,g_k)$ is uniformly bounded from below by $l>0$. There exists $g\in \M$ and a sequence $(f_k)_{k\in\mathbb{N}}\subset \text{Diff}(\Sigma_\fkp)$ such that
$$f_k^*g_k\underset{\mathcal{C}^2}{\longrightarrow} g.$$
\end{prop}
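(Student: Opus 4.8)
The plan is to adapt Tromba's proof of Mumford compactness to the singular setting, treating the cone points as a fixed, already-normalized part of the geometry by Proposition \ref{boundeddistance}. Since each $V_i$ is isometric to a fixed neighborhood of the cone tip in $\mathbb{H}^2_{\alpha_i}$ for \emph{every} metric in $\M$, the singular regions contribute no parameters to degenerate: all the potential non-compactness lives in the thick part $\Sigma_\fkp\setminus V$. Thus the strategy is to reduce the statement to uniform control over the compact-with-boundary surface $S:=\Sigma_\fkp\setminus V$, on which the metrics $g_k$ are genuinely smooth hyperbolic metrics with geodesic (or fixed) boundary $\partial V$.

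First I would set up a thick-thin decomposition of $(S,g_k)$ via the Margulis lemma, valid uniformly because the boundary $\partial V$ has fixed length and the lower bound $l>0$ on closed geodesics in the interior rules out short simple closed geodesics in the thin part. The hypothesis that no closed geodesic in $\Sigma_\fkp\setminus V$ degenerates is precisely the condition that prevents collar regions from becoming arbitrarily long and pinching. I would then invoke a bounded-geometry / harmonic-coordinate argument: on each thick piece the injectivity radius is bounded below and the curvature is identically $-1$, so one obtains uniform $\mathcal{C}^2$ (indeed $\mathcal{C}^\infty$) bounds on the metric coefficients in suitable coordinate charts. Passing to a subsequence and patching the charts with a fixed finite atlas produces a limiting hyperbolic metric $g$ on $S$, together with diffeomorphisms $f_k$ (built chart-by-chart and glued by a partition of unity) realizing the $\mathcal{C}^2$ convergence $f_k^*g_k\to g$ on $S$.

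The final step is to extend $f_k$ and $g$ across the cone neighborhoods $V$ so that $g\in\M$ globally. Here I would exploit that $g_k|_{V_i}$ is the \emph{same} model metric $d\rho_i^2+\sinh^2\rho_i\,d\theta_i^2$ for all $k$: on $V$ one simply takes $g|_{V_i}$ to be this fixed model and arranges $f_k$ to be the identity near the tips and to match the thick-part diffeomorphisms near $\partial V_i$, interpolating in the collar so that the glued $f_k$ is a genuine diffeomorphism of $\Sigma_\fkp$ and the limit metric has the required $\chi^{2,\gamma}_b$ regularity at each $p_i$. The compatibility of the boundary values is where the fixed-length, fixed-model structure of $\partial V$ makes the gluing automatic.

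The main obstacle is the control of the thin part and the uniform bounded-geometry estimates near the boundary $\partial V$: one must verify that the collar lemma and the Margulis decomposition behave uniformly in the presence of the cone tips, i.e. that the cone points cannot create short geodesics encircling them or spurious thin necks that are not detected by the interior length hypothesis. Since the cone angles are $<\pi$ and Proposition \ref{boundeddistance} already pins down a definite isometric neighborhood around each $p_i$, I expect this to go through, but it requires carefully checking that the model region $V_i$ absorbs all the singular behavior so that standard smooth compactness applies verbatim on $S$. Extracting the diffeomorphisms $f_k$ with the prescribed regularity at the punctures, rather than merely the metric convergence, is the delicate point that needs the most care.
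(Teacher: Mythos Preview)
Your proposal is correct and follows the same overall strategy as the paper: both adapt Tromba's Mumford compactness argument by using Proposition~\ref{boundeddistance} to freeze the cone neighborhoods $V_i$ as fixed model pieces, so that all potential degeneration is pushed onto the smooth complement where the classical argument applies.

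The implementations differ in two places that are worth noting. First, you invoke a Margulis thick--thin decomposition and harmonic coordinates to get uniform $\mathcal{C}^2$ control; the paper instead exploits the constant curvature directly: since each $D_{2R}(x_i^k)$ embeds isometrically as a fixed ball $B\subset\mathbb{H}^2$, the chart changes $\tau_{ij}^k=\Psi_i^k\circ(\Psi_j^k)^{-1}$ are elements of $PSL(2,\mathbb{R})$ moving a point of $B$ into $B$, hence lie in a compact set and subconverge---no harmonic--coordinate estimates or thick--thin analysis are needed. Second, rather than building the limit on $S=\Sigma_\fkp\setminus V$ and then interpolating $f_k$ across a collar to extend over $V$, the paper includes the cone charts $\psi_j:V_j\to\mathbb{H}^2_{\alpha_j}$ in the atlas from the start, treats the mixed transitions $\alpha_{ij}^k=\Phi_j\circ\psi_j\circ(\Psi_i^k)^{-1}$ exactly as the $\tau_{ij}^k$, and glues the limit manifold $M$ in one step; the diffeomorphisms $f_k$ are then produced globally by Tromba's Lemma~C4. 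Your interpolation scheme would also work, but the paper's approach sidesteps the regularity check at $\partial V_i$ that you flag as delicate.
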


\begin{proof}
Let $(g_k)_{k\in\mathbb{N}}$ be as above. It follows that there exists $\rho>0$ such that, for each $k\in \mathbb{N}$ and $x\in \Sigma_{\fkp}\setminus V$, the injectivity radius of $x$ is bigger than $\rho$ (for example, take $\rho=\min\{l,r_1,...,r_n\}$). 

Fix $R>0$ such that $R<\frac{1}{2}\rho$. As the area of $(\Sigma_\fkp\setminus V,g_k)$ is independent of $k$, there exists $N>0$ such that for each $k\in\mathbb{N}$, $N$ is the maximum number of disjoint disks of radius $\frac{R}{2}$ in $\Sigma_\fkp$.

That is, for each $k\in\mathbb{N}$, there exists $\left(x_1^k,...,x_N^k\right)\subset \Sigma_\fkp\setminus V$ such that $D_{\frac{R}{2}}\left(x_1^k\right),...,D_{\frac{R}{2}}\left(x_N^k\right),V_1,...,V_n$ are disjoints (here $D_{\frac{R}{2}}(x_i^k)\subset \Sigma_\fkp$ is the disk of center $x_i^k$ and radius $\frac{R}{2}$) and $D_R(x_1^k),...,R_R(x_N^k),V_1,...,V_n$ is a covering of $\Sigma_p$.

For each $i,j\in\{1,...,N\}$ with $D_R(x_i^k)\cap D_R(x_j^k)\neq \emptyset$, note that $x_i^k\in D_{2R}(x_j^k)$, $x_j^k\in D_{2R}(x_i^k)$ and, as $2R<\rho$, there exists isometries $\Psi^k_i$ and $\Psi^k_j$ sending $D_{2R}(x_i^k)$ (resp. $D_{2R}(x_j^k)$) to the disk $B$ of radius $2R$ centered at $0$ in $\mathbb{H}^2$.

It follows that the map $\tau_{ij}^k:=\Psi_i^k\circ (\Psi_j^k)^{-1}$ is a positive local isometry of $\mathbb{H}^2$ which uniquely extend to $\tau_{ij}^k\in PSL(2,\mathbb{R})$. Moreover, for each $k$, $$\tau_{ij}^k(\Psi_j^k(x_i^k))=\Psi_i^k(x_j^k)\in B,$$
that is $(\tau_{ij}^k)_{k\in\mathbb{N}}$ is compact. So $(\tau_{ij}^k)_{k\in\mathbb{N}}$ admits a convergent subsequence whose limit is denoted by $\tau_{ij}$.

For each $i\in\{1,...,N\}$ and $j\in\{1,...,n\}$ with $D_{2R}(x_i^k)\cap V_j\neq \emptyset$, there exists an isometry $\Psi_i^k:D_{2R}(x_i^k)\longrightarrow B\subset \mathbb{H}^2$ and $\psi_j: V_j\longrightarrow \mathbb{H}^2_{\alpha_j}$. As $\psi_i(D_{2R}(x_i^k)\cap V_j)$ is a simply connected subset of $\mathbb{H}^2_{\alpha_j}$, it is isometric to a subset of $B\subset\mathbb{H}^2$ by an isometry denoted $\Phi_j$.

Pick-up a point $y^k\in D_{2R}(x_i^k)\cap V_j$. The map $\alpha^k_{ij}:=\Phi_j\circ\psi_j\circ(\Psi^k_i)^{-1}$ (see Figure \ref{alpha}) is a positive local isometry of $\mathbb{H}^2$ which uniquely extends to an element of $PSL(2,\mathbb{R})$. Moreover, $\alpha_{ij}^k$ sends $\Psi_i^k(y)$ to $\Phi\circ \psi_j(y)$ which are both in the compact set $\overline{B}\subset \mathbb{H}^2$ (the closure of $B$). Then, by the same argument as before, $\alpha^k_{ij}\longrightarrow \alpha_{ij}\in PSL(2,\mathbb{R})$ (up to a subsequence).

\begin{figure}[!h] 
\begin{center}
\includegraphics[height=10cm]{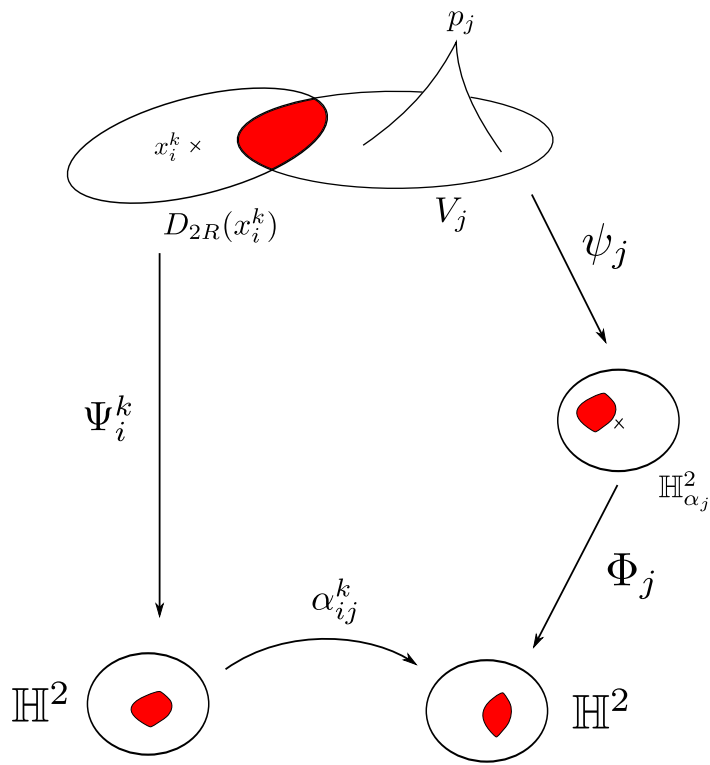}
\end{center}
\caption{The map $\alpha_{ij}^k$} 
\label{alpha}
\end{figure}
Now, define 
$$M:=\left( B_1\sqcup ...\sqcup B_N \sqcup \psi_1(V_1)\sqcup...\sqcup \psi_n(V_n) \right)/\sim,$$
where $B_i=B\subset \mathbb{H}^2$ for each $i$ and $\sim$ identifies:

\begin{itemize}
\item $x_i\in B_i$ with $x_j\in B_j$ whenever $\tau_{ij}$ exists and $\tau_{ij}(x_j)=x_i$.
\item $x_i\in B_i$ with $x_j\in \psi_j(V_j)$ whenever $\alpha_{ij}$ exists and $\alpha_{ij}(x_i)=\Phi(x_j)$.
\end{itemize}

Obviously, $M$ is an hyperbolic surface with cone singularities and defines a point $g\in\M$.

Now, we claim that there exist diffeomorphisms $f_k: M \longrightarrow (\Sigma_\fkp,g_k)$ with $f_k (B_j)\subset D_{R}(x_j^k)$, $f_k(V_i)\subset V_i$ and such that 
$$\Psi^k_j\circ f_k \underset{\mathcal{C}^2}{\longrightarrow} id~\text{ on each } B_j,~\text{ and } \psi_i\circ f_k\underset{\mathcal{C}^2}{\longrightarrow} id~\text{ on  each }\mathbb{H}^2_{\alpha_i}.$$
The proof of this claim is exactly analogous to the proof of \cite[Lemma C4 p.188]{tromba} and will not be repeated here.

Hence, on each $B_j$, we have
$$f_k^*\Psi_j^{k*}g_P\underset{\mathcal{C}^2}{\longrightarrow} g_P,$$
(where $g_P$ is the Poincar\'e metric) and on each $V_i$
$$f_k^*\psi_i^*g_{\alpha_i}\underset{\mathcal{C}^2}{\longrightarrow} g_{\alpha_i}.$$
But, as $\Psi_j^k$ and $\psi_i$ are isometries, we get:
$$f_k^*g_k\underset{\mathcal{C}^2	}{\longrightarrow} g.$$
\end{proof}

Now we are able to prove the properness of $\E_{g_0}$. Let $(\frak{c}_k)_{k\in\mathbb{N}}\subset\T$ such that $(\E_{g_0}(\frak{c}_k))_{k\in\mathbb{N}}$ is convergent. For each $k\in\mathbb{N}$, choose a point $g_k\in\M$ such that the conformal class of $g_k$ is $\frak{c}_k$. It follows that $E(u_{g_k,g_0})\leq K$ for all $k\in\mathbb{N}$.

Let $\gamma$ be a simple closed curve in $\Sigma_\fkp\setminus V$. For $k\in\mathbb{N}$, let $\gamma_k$ be the unique geodesic isotopic to $\gamma$ for the metric $g_k$. Note that there exists no geodesic homotopic to a cone point on a hyperbolic surface. In fact, if $\gamma$ would be such a geodesic, consider the surface obtained by taking two times the connected component of $\Sigma\setminus\gamma$ containing the cone point and glue them along $\gamma$. The remaining surface would be a hyperbolic sphere with two punctures, but it is well-know that such a hyperbolic surface does not exist.

So $\gamma$ is not homotopic to $\partial V_i$ for some $i\in\{1,...,n\}$, so by \cite[Theorem 3.2.4]{tromba} we have:
$$l(\gamma_k)>\frac{C}{K}$$
for some constant $C>0$.

So $(l(\gamma_k))_{k\in\mathbb{N}}$ is bounded from below and we can use Proposition  \ref{mumford} and we get a family $(f_k)_{k\in\mathbb{N}}\subset \text{Diff}(\Sigma_\fkp)$ such that $f_k^*g_k\underset{\mathcal{C}^2}{\longrightarrow} g$.

For all $k\in\mathbb{N}$, denote by $u_k:(\Sigma_\fkp,\frak{c}_k)\longrightarrow (\Sigma_\fkp,g_0)$ the harmonic diffeomorphism isotopic to the identity. By \cite[Lemma 3.2.3]{tromba}, the sequence $(u_k)_{k\in\mathbb{N}}$ is equicontinuous. It follows that the classes of $(f_k)_{k\in\mathbb{N}}$ in $\text{Diff}(\Sigma_\fkp)/\text{Diff}_0(\Sigma_\fkp)$ takes only a finite set of values. In fact, as 
$$E(u_{\frak{c}_k,g_0})=E(u_{f^*_k\frak{c}_k,g_0})=E(f^*_ku_{\frak{c}_k,g_0})<K,$$
 the sequence $(f_k^*u_k)_{k\in\mathbb{N}}$ is equicontinuous and admits a convergent subsequence by Arzel\'{a}-Ascoli. As $\text{Diff}(\Sigma_\fkp)/\text{Diff}_0(\Sigma_\fkp)$ is discrete, there exists a $N\in\mathbb{N}$ such that, for $k$ bigger than $N$, $[f_k]\in \text{Diff}_0(\Sigma_\fkp)$ is constant. It follows that, up to a subsequence, $([f^*_k\frak{c}_k])_{k\in\mathbb{N}}$ converges in $\T$.

\subsection{Weil-Petersson gradient of $\E_{g_0}$}

Let $\frak{c}\in\T$. We are going to use real coordinates $(x,y)$ on $(\Sigma_\fkp,\frak{c})$. From now on, denote by $\partial_1:=\partial_x$ and $\partial_2:=\partial_y$ and by $(dx_1,dx_2)$ the dual framing. Denote by $u:=u_{\frak{c},g_0}$ and fix $\g\in\M$ such that the conformal class of $\g$ is $\frak{c}$. In local coordinates, we have the following expression:
$$du=\sum_{i,j,\alpha,\beta=1}^2 \partial_i u^\alpha dx_{i}\otimes \partial_{u^\alpha},$$
where $(u^1,u^2)$ are the coordinates of $u$ on $(\Sigma_\fkp,g_0)$. Assume that $(u^1,u^2)$ are isothermal coordinates for $g_0$, so 
$$g_0= \sum_{\alpha, \beta=1}^2 \sigma^2(u)\delta_{\alpha\beta}du^\alpha du^\beta,$$
(here $\delta_{\alpha\beta}$ is the Kronecker symbol). Writing $\g$ in coordinates and using the Einstein convention, we have the following expression:
$$E(u)=\frac{1}{2}\int_{\Sigma_\fkp} \| du \|^2dv_{\widetilde{g}}=\frac{1}{2}\int_{\Sigma_\fkp}\sigma^2\delta_{\alpha\beta}\g^{ij}\partial_iu^\alpha\partial_ju^\beta vol_{\g}.$$
Here, $vol_{\g}$ is the volume form of $(\Sigma_\fkp,\g)$ and $\g^{ij}$ are the coefficients of the metric dual to $\g$ in $T^*\Sigma_\fkp$.

For $h\in T_\frak{c}\T$, denote by $\h$ the horizontal lift of $d\Theta_\alpha(h)$ in $T_{\g}\M$ (recall that $\Theta_\alpha$ is the application given by the uniformization). So $\widetilde{h}$ is a zero trace divergence-free symmetric $2-$tensor on $(\Sigma_\fkp,\widetilde{g})$.

We are going to compute the differential of $\widetilde{\E}_{g_0}$ at $\widetilde{g}$ in the direction $\h$. Note that the differential of $\g\longmapsto (\g^{ij})$ is given by $\h\longmapsto (-\h^{ij})$ and the differential of $\g\longmapsto vol_{\g}$ is $\h\longmapsto (\frac{1}{2}\tr_{\g}\h)vol_{\g}$. So one gets:

$$d \widetilde{\E}_{g_0}(\g)(\h)=-\frac{1}{2}\int_{\Sigma_\fkp}\sigma^2\h^{ij}\partial_iu^\alpha\partial_ju^\alpha vol_{\g}+\frac{1}{4}\int_{\Sigma_\fkp}\sigma^2\g^{ij}\partial_iu^\alpha\partial_ju^\alpha(tr_{\g}\h)vol_{\g}+R(\h),$$
where the term $R(\h)$ is obtained by fixing $\widetilde{g}$ and $dvol_{\widetilde{g}}$ and varying the rest. It follows that $R(\h)$ correspond to the first order variation of $E(u)$ in the direction $\widetilde{h}$. But as $u$ is harmonic, $R(\widetilde{h})=0$. 

Moreover, the second term is zero because we have chosen a horizontal lift of $h$, hence $tr_{\g}\h=0$.

Writing $u=u^1+iu^2$ and using the fact that $\h^{11}=-\h^{22}$ and $\h^{12}=\h^{21}$ (see Section \ref{teich}), we get the following expression:
\begin{eqnarray*}
d\E_{g_0}(\g)(\h) & = & -\frac{1}{2}\int_{\Sigma_\fkp}\sigma^2\left( \h^{11}\left(\vert \partial_1u\vert^2-\vert\partial_2u\vert^2\right) + 2\h^{12}\Re(\partial_1u\partial_2 \overline{u})\right)vol_{\g}\\
& = & \langle \h,\varphi\rangle_{\Sym^2(\Sigma_\fkp)},
\end{eqnarray*}
where
$$\varphi=-\frac{1}{2}\sigma^2(u)\left((\vert\partial_1u\vert^2-\vert \partial_2u\vert^2)(dx^2-dy^2)+2\Re(\partial_1u\partial_2\overline{u})(dxdy+dydx)\right).$$
Note that, by definition, $\varphi$ is the Weil-Petersson gradient $\nabla\E(\frak{c})$ of $\E$ at the point $\frak{c}\in\T$. 
On the other hand,
\begin{eqnarray*}
\Re(\Phi(u)) & = & \Re(\sigma^2(u)\partial_zu\partial_z\overline{u}dz^2) \\
& = & \Re\left(\frac{1}{4}\sigma^2(u)(\partial_1u-i\partial_2u)(\partial_1\overline{u}-i\partial_2\overline{u})(dx^2-dy^2+i(dxdy+dydx))\right) \\
& = & \frac{1}{4}\sigma^2(u)\left((\vert\partial_1u\vert^2-\vert\partial_2u\vert^2)(dx^2-dy^2)+2\Re(\partial_1u\partial_2\overline{u})(dxdy+dydx)\right).
\end{eqnarray*}
So $\nabla\E(\frak{c})=-2\Re(\Phi(u))$.

\section{Minimal diffeomorphisms between hyperbolic cone surfaces}\label{minimal}

In this section, we prove the Main Theorem by studying the PDE satisfied by harmonic diffeomorphisms.

\subsection{Existence}

\begin{prop}\label{existence}
For each $\alpha,\alpha'\in \left(0,\frac{1}{2}\right)^n$, $g_1\in \F$ and $g_2\in\FF$, there exists a minimal diffeomorphism $\Psi: (\Sigma_\fkp,g_1)\longrightarrow (\Sigma_\fkp,g_2)$ isotopic to the identity.
\end{prop}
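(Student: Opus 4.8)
The plan is to obtain the minimal map from a critical point of the total energy on Teichm\"uller space. I would study the functional
$$F := \E_{g_1}+\E_{g_2}:\T\longrightarrow\mathbb{R}.$$
Since the energy density is non-negative, $F\geq 0$ is bounded below; and since each summand is proper (by the properness established above) and non-negative, $F$ is itself proper: if $F(\frak{c}_k)$ stays bounded then $\E_{g_1}(\frak{c}_k)\leq F(\frak{c}_k)$ is bounded, so $(\frak{c}_k)$ subconverges in $\T$. A continuous, proper function bounded below attains its infimum, so $F$ has a global minimum at some $\frak{c}\in\T$.

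At this minimum the Weil--Petersson gradient of $F$ vanishes. Denote by $u_i:=u_{\frak{c},g_i}:(\Sigma_\fkp,\frak{c})\to(\Sigma_\fkp,g_i)$ the harmonic diffeomorphism isotopic to the identity provided by Gell-Redman's theorem. By Theorem \ref{energy},
$$0=\nabla F(\frak{c})=-2\Re\big(\Phi(u_1)\big)-2\Re\big(\Phi(u_2)\big)=-2\Re\big(\Phi(u_1)+\Phi(u_2)\big).$$
Now $\Phi(u_1)+\Phi(u_2)$ is holomorphic on $\Sigma_\fkp$ (both $u_i$ being harmonic) with at most simple poles at the $p_i$, and the real part map $\psi\mapsto\Re(\psi)$ is injective on quadratic differentials; hence the vanishing of the real part forces
$$\Phi(u_1)+\Phi(u_2)=0.$$

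It then remains to assemble the map $U:=(u_1,u_2):(\Sigma_\fkp,\frak{c})\to(\Sigma_\fkp\times\Sigma_\fkp,g_1\oplus g_2)$ and to recognize its image as a minimal graph. Harmonicity into a Riemannian product is componentwise, so $U$ is harmonic, and its Hopf differential is
$$\Phi(U)=\big(u_1^*g_1+u_2^*g_2\big)^{(2,0)}=\Phi(u_1)+\Phi(u_2)=0,$$
so $U$ is weakly conformal. A weakly conformal harmonic map from a Riemann surface is a branched minimal immersion, and since $u_1$ is a diffeomorphism the differential $dU=(du_1,du_2)$ has full rank everywhere, so $U$ is an unbranched minimal immersion. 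Its image is exactly the graph of $\Psi:=u_2\circ u_1^{-1}$, which is therefore minimal; hence $\Psi:(\Sigma_\fkp,g_1)\to(\Sigma_\fkp,g_2)$ is the desired minimal diffeomorphism, isotopic to the identity because each $u_i$ is.

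The step I expect to be most delicate is controlling all of this at the cone points, where the metrics, the harmonic maps, and the immersion $U$ are only regular in the weighted H\"older spaces $\chi^{k,\gamma}_b$. One must check that the principle ``weakly conformal harmonic $\Leftrightarrow$ minimal'' and the immersion property survive across the singular fibers $\{p_i\}\times\Sigma_\fkp$ and $\Sigma_\fkp\times\{p_i\}$, so that the graph is a genuine minimal submanifold there. The standing hypothesis that every cone angle is smaller than $\pi$ is precisely what supplies the required regularity (through Gell-Redman's existence theorem) and bounds the order of the poles of $\Phi(u_i)$, ensuring that $\Re(\Phi(u_i))$ is $L^2$ and that the variational computation of the gradient in Theorem \ref{energy} is valid up to the punctures.
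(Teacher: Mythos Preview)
Your proof is correct and follows essentially the same approach as the paper: use properness of $\E_{g_1}+\E_{g_2}$ to find a critical point $\frak{c}$, invoke the gradient formula from Theorem \ref{energy} to obtain $\Phi(u_1)+\Phi(u_2)=0$, and conclude that $(u_1,u_2):(\Sigma_\fkp,\frak{c})\to(\Sigma_\fkp\times\Sigma_\fkp,g_1\oplus g_2)$ is a conformal harmonic immersion whose image is the minimal graph of $\Psi=u_2\circ u_1^{-1}$. Your additional remarks (the explicit reason $F$ is proper, the injectivity of $\psi\mapsto\Re(\psi)$, the unbranched immersion argument via $du_1$, and the caveat about regularity at the cone points) are all legitimate elaborations of steps the paper leaves implicit.
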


\begin{proof}
Let $g_1\in\F$, $g_2\in\FF$ and consider $M:=(\Sigma_\fkp\times\Sigma_\fkp,g_1\oplus g_2)$. Given a conformal structure $\frak{c}\in\T$, one can consider the map
$$f_\frak{c}:=(u_1,u_2) : (\Sigma_\fkp,\frak{c})\longrightarrow M,$$
where $u_i: (\Sigma_\fkp,\frak{c})\longrightarrow (\Sigma_\fkp,g_i)$ is the harmonic diffeomorphism isotopic to the identity ($i=1,2$).

Clearly, $E(f_\frak{c})=E(u_1)+E(u_2)$. From Section \ref{energyfunctional}, the functional $\E:=\E_{g_1}+\E_{g_2}: \T\longrightarrow \mathbb{R}$  is proper. Let $\frak{c}_0$ be a critical point of $\E$, so the map $\Psi:=f_{\frak{c}_0}:~(\Sigma,\frak{c}_0)\longrightarrow M$ is a harmonic immersion. We claim that $\Psi$ is also conformal. In fact, $\Psi=(u_1,u_2)$, so
\begin{eqnarray*}
\Psi^*(g_1\oplus g_2) & = & u_1^*g_1\oplus u_2^*g_2 \\
& = & \Phi(u_1)+\Phi(u_2)+\rho^2(z)(e(u_1)+e(u_2))\vert dz\vert^2 + \overline{\Phi(u_1)}+\overline{\Phi(u_2)},
\end{eqnarray*}
where $z$ is a local holomorphic coordinates on $(\Sigma_\fkp,\frak{c}_0)$ such that $\Theta_\alpha(\frak{c}_0)=\rho^2(z)\vert dz\vert^2$.

Now, as $\frak{c}_0$ is a minimum of $\mathcal{E}$, $\nabla\mathcal{E}(\frak{c}_0)=-2\Re\left(\Phi(u_1)+\Phi(u_2)\right)=0$, so $\Phi(u_1)+\Phi(u_2)=0$ and $\Psi$ is conformal. It follows that $\Psi$ is a conformal harmonic immersion, hence $\Psi(\Sigma_\fkp)$ is a minimal surface in $M$ (see \cite[Proposition p. 119]{eells-sampson}).

Denoting by $p_i : M \longrightarrow \Sigma_\fkp$ the projection on the $i$-th factor ($i=1,2$) and $\Gamma=\Psi(\Sigma_\fkp)$, we get that $u_i=p_{i_{\vert \Gamma}}$ and $\Gamma=\text{graph}(p_{2_{\vert\Gamma}}\circ p^{-1}_{1_{\vert\Gamma}})$. It follows that $$p_{2_{\vert\Gamma}}\circ p_{1_{\vert\Gamma}}^{-1}:(\Sigma_\fkp,g_1)\longrightarrow (\Sigma_\fkp,g_2)$$
is a minimal diffeomorphism isotopic to the identity.
\end{proof}

\begin{rem}
For $\Psi: (\Sigma,g_1)\longrightarrow (\Sigma,g_2)$ a minimal diffeomorphism as in Proposition \ref{existence}, the induced metric $g_\Gamma$ on $\Gamma=\text{graph}(\Psi)$ carries conical singularities of angle $\beta=(\beta_1,...,\beta_n)$ where $\beta_i=2\pi\min(\alpha_i,\alpha'_i)$. In fact, normalizing the metrics $g_1$ and $g_2$ so that $\Psi=id$, and choosing conformal coordinates $z$ in a neighborhood of $(p_i,p_i)\in \Gamma$, one has the following expression:
\begin{eqnarray*}
g_\Gamma & = & id^*g_1+id^*g_2 \\
& = & \left( e^{2\lambda}\vert z \vert^{2(\alpha-1)}+e^{2\lambda'}\vert z\vert^{2(\alpha'-1)}\right)\vert dz\vert^2 \\
& = & e^{2\lambda}\vert z \vert^{2(\alpha-1)}\left( 1+ e^{2(\lambda'-\lambda)}\vert z\vert^{2(\alpha'-\alpha)}\right) \vert dz\vert^2 \\
& = & e^{2\mu}\vert z \vert^{2(\alpha-1)}\vert dz\vert^2
\end{eqnarray*}
where $\lambda,\lambda'$ and $\mu$ are continuous functions. So $g_\Gamma$ carries a conical singularity of angle $2\pi\alpha$ at $(p_i,p_i)$ in the sense of Remark \ref{generalsingularity}.
\end{rem}

\subsection{Uniqueness}

Before proving the rest of the Main Theorem, let's recall some results about the harmonic diffeomorphisms provided by \cite{jesse}. We use the same notations as in the proof above. Let $z$ be conformal coordinates on $\Gamma$ such that $$g_\Gamma=\rho^2(z)\vert dz\vert^2,~g_i=\sigma_i^2(u_i(z))\vert du_i\vert^2.$$
The natural complex structure on $\Gamma$ provides a decomposition of vector-valued $1$-forms on $\Gamma$ into their $\mathbb{C}$-linear and $\mathbb{C}$-antilinear part. In particular, for $i=1,2$, we get:
$$\frac{1}{\sqrt{2}}du_i=\partial u_i +\overline{\partial}u_i,$$
where $\partial u_i\in \Omega^{1,0}(\Gamma, u_i^*T\Sigma_\fkp),~\overline{\partial}u_i\in \Omega^{0,1}(\Gamma,u_i^*T\Sigma_\fkp)$.
In follows that 
$$e(u_i)=\frac{1}{2}\Vert du_i\Vert^2=\Vert \partial u_i\Vert^2 + \Vert \overline{\partial} u_i \Vert^2,$$
which in coordinates gives
$$
\left\{
   \begin{array}{ll}
        \|\partial u_i \|^2(z)=\rho^{-2}(z)\sigma_i^2(u_i(z))\vert\partial_z u_i\vert^2 \\
        \|\overline{\partial} u_i \|^2(z)=\rho^{-2}(z)\sigma_i^2(u_i(z))\vert\overline{\partial}_z u_i\vert^2.
   \end{array}
\right.
$$
Then we have the following expressions (cf. Section \ref{energyfunctional}):
$$\left\{
\begin{array}{l}
  \|\Phi(u_i)\|=\|\partial u_i \| \|\overline{\partial}u_i\| \\
  e(u_i)=\|\partial u_i \|^2+\|\overline{\partial}u_i\|^2 \\
  J(u_i)= \|\partial u_i \|^2-\| \overline{\partial}u_i\|^2.
\end{array}
\right.$$
Note that, as $u_i$ is orientation preserving, $J(u_i)>0$ and in particular $\|\partial u_i \|\neq 0$.

It is well-known that these functions satisfy a Bochner type identities everywhere it is defined (see \cite{schoenyau})
\begin{equation}\label{bochner}
  \left\{
      \begin{aligned}
       & \Delta \ln \|\partial u_i \| =\|\partial u_i \|^2-\| \overline{\partial}u_i\|^2-1\\        
       & \Delta \ln \|\overline{\partial} u_i \| =-\|\partial u_i \|^2+\| \overline{\partial}u_i\|^2-1,\\
      \end{aligned}
    \right.
\end{equation}
where $\Delta=\Delta_{g_\Gamma}=\delta\delta^*$.

Note that, as $\Phi(u_i)$ is holomorphic outside $\fkp$, the singularities of $\ln \| \overline{\partial} u_i \|$ on $\Sigma_\fkp$ are isolated and have the form $c\ln r$ for some $c>0$. In fact, as $J(u_i)>0$, $\|\partial u_i\|\neq 0$. Because $\|\Phi(u_i)\|=\|\partial u_i \| \|\overline{\partial}u_i\|$, the singularities of $\ln \| \overline{\partial} u_i \|$ correspond to zeros of $\Phi(u_i)$.

Now, let's describe the behavior of $\|\partial u_i \|$ and $\|\overline{\partial} u_i \|$ around a puncture. Let $z$ be a conformal coordinates system on $(\Sigma_\fkp,g_\Gamma)$ centered at $p$. From \cite[Section 2.3, Form 2.3]{jesse}, in a neighborhood $U$ of a puncture of angle $2\pi\alpha$, the map $u_i$ has the following form:
$$u_i(z)=\lambda_i z + r^{1+\epsilon}f_i(z),$$
where $\lambda_i\in \mathbb{C}^*$, $r=\vert z \vert$, $\epsilon>0$ and $f\in\chi^{2,\gamma}_b(U)$ (see Subsection \ref{hyperbolicconesurfaces}). Note that in particular, $f\in \mathcal{C}^2(U)$. Using 
$$
\left\{
   \begin{array}{ll}
       \partial_z = & \frac{1}{2z}(r\partial_r-i\partial_\theta) \\
       \overline{\partial}_z = & \frac{1}{2\overline{z}}(r\partial_r+i\partial_\theta)
   \end{array}
\right.
$$
we get that
$$
\left\{
   \begin{array}{ll}
       \partial_z u_i = & \lambda_i + r^\epsilon L(f_i) \\
       \overline{\partial}_zu_i = & r^\epsilon\overline{L}(f_i)
   \end{array}
\right.
$$
where 
$$
\left\{
   \begin{array}{ll}
       L=\frac{r}{2z}\big( (1+\epsilon)Id+\partial_r-i\partial_\theta\big) \\
       \overline{L}=\frac{r}{2\z}\big( (1+\epsilon)Id+\partial_r+i\partial_\theta\big).
   \end{array}
\right.
$$
Let $\alpha$ (resp. $\alpha'$) be the cone angle of the singularity of $g_1$ (resp. $g_2$) at $p$. So, from section \ref{sing}, there exists some bounded non vanishing functions $c_1$ and $c_2$ so that
$$
\left\{
   \begin{array}{ll}
       \sigma^2_1(u_1)=c_1^2\vert u_1 \vert^{2(\alpha-1)} \\
       \sigma^2_2(u_2)=c_2^2\vert u_2 \vert^{2(\alpha'-1)}.
   \end{array}
\right.
$$
It follows that
\begin{equation}\label{equation3}
\left\{
   \begin{array}{ll}
       \| \partial u_1 \|^2 & =  \rho^{-2}(z)c_1^2\vert\lambda_1z+r^{1+\epsilon}f_1\vert^{2(\alpha-1)}\vert\lambda_1+r^\epsilon L(f_1)\vert^2 \\
       & =  \rho^{-2}(z)c_1^2\vert\lambda_1\vert^{2\alpha}r^{2(\alpha-1)} \left(1+O(r^\epsilon)\right) \\
       \| \overline{\partial}u_1 \|^2 & =  \rho^{-2}(z)c_1^2 \vert \lambda_1 \vert^{2(\alpha-1)}r^{2(\alpha-1)+2\epsilon}\vert \overline{L}(f_1)\vert^2 (1+O(r^\epsilon)).
   \end{array}
\right.
\end{equation}

\begin{prop}\label{uniqueness}
If $\alpha_i<\alpha'_i$ for all $i\in \{1,...,n\}$, the minimal diffeomorphism $\Psi: (\Sigma_\fkp,g_1)\longrightarrow (\Sigma_\fkp,g_2)$ of Proposition \ref{existence} is unique.
\end{prop}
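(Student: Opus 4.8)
The plan is to translate the uniqueness of the minimal diffeomorphism into the uniqueness of the critical point of $\E=\E_{g_1}+\E_{g_2}$ on $\T$. By Proposition \ref{existence} every critical point of $\E$ produces a minimal diffeomorphism isotopic to the identity, and conversely any such diffeomorphism $\Psi$ gives a minimal graph $\Gamma$ whose conformal parametrization $f_{\frak{c}_0}=(u_1,u_2)$ is conformal harmonic; the components $u_i$ are then the harmonic maps $u_{\frak{c}_0,g_i}$ and conformality reads $\Phi(u_1)+\Phi(u_2)=0$, i.e. $\nabla\E(\frak{c}_0)=0$ by Theorem \ref{energy}. So it suffices to prove that $\E$ has a single critical point. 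Since $\E\geq 0$ is proper (Theorem \ref{energy}), it attains its minimum; I would then show that \emph{every} critical point of $\E$ is a nondegenerate local minimum. A proper smooth function on the connected (indeed contractible) manifold $\T$ all of whose critical points are strict local minima has exactly one critical point: two distinct minima would be joined by a mountain‑pass (minimax) critical value, whose critical point is not a local minimum, contradicting nondegenerate minimality. Thus everything reduces to the positivity of the Hessian of $\E$ at a critical point.

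To compute this Hessian I would linearize, as $\frak{c}$ varies over $\T$, the identity $\nabla\E(\frak{c})=-2\Re\big(\Phi(u_1)+\Phi(u_2)\big)$, differentiating the harmonic maps $u_i=u_{\frak{c},g_i}$ through the linearized harmonic map equations. At a critical point $\Gamma$ is minimal in $M=(\Sigma_\fkp\times\Sigma_\fkp,g_1\oplus g_2)$, and the Hessian of $\E$ is exactly the second variation of area of $\Gamma$ restricted to the normal variations induced by deformations of the conformal structure; its positivity is the \emph{stability} of $\Gamma$ referred to in the introduction. After integrating by parts on $(\Sigma_\fkp,g_\Gamma)$, where $g_\Gamma$ is the induced cone metric of angle $2\pi\alpha_i=2\pi\min(\alpha_i,\alpha'_i)$ at $p_i$, this quadratic form should reduce to an integral expressed through $\|\partial u_i\|$, $\|\overline{\partial}u_i\|$ and the Jacobians $J(u_i)>0$, whose relations are governed by the Bochner identities \eqref{bochner}.

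The heart of the argument is then a maximum principle on $(\Sigma_\fkp,g_\Gamma)$. A degeneracy of the Hessian would produce a nontrivial Jacobi field, i.e. a solution of the linearized system; feeding \eqref{bochner} into the equation it satisfies shows that an associated quantity is sub- or super-harmonic for $\Delta=\Delta_{g_\Gamma}$ and hence, by the maximum principle (the Laplacian on the cone surface having positive spectrum, in the spirit of Corollary \ref{selfadjoint}), must be constant, forcing the Jacobi field to vanish. This yields strict positivity and, with the first paragraph, uniqueness.

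I expect the main obstacle to be the analysis at the cone points, since all the integrations by parts and the maximum principle are a priori valid only on $\Sigma_\fkp$: one must show that the boundary terms over the circles $\{|z|=\epsilon\}$ around each $p_i$ vanish as $\epsilon\to 0$. This is precisely where the hypothesis $\alpha_i<\alpha'_i$ enters. By the expansions \eqref{equation3} it makes $g_\Gamma$ have cone angle $2\pi\alpha_i$, keeps $\|\partial u_1\|$ bounded away from $0$ while $\|\overline{\partial}u_1\|\to 0$, and makes $u_2$ degenerate with $J(u_2)\to 0$ at the rate $r^{2(\alpha'_i-\alpha_i)}$; these asymptotics give exactly the decay needed to discard the boundary contributions and to run the maximum principle up to the punctures. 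The reversed ordering $\alpha_i>\alpha'_i$ would spoil these signs, which is consistent with uniqueness being claimed only under $\alpha_i<\alpha'_i$. Once the boundary terms are controlled, positivity of the Hessian follows and the minimal diffeomorphism is unique.
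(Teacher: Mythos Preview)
Your overall architecture is correct and matches the paper: minimal diffeomorphisms correspond to critical points of $\E=\E_{g_1}+\E_{g_2}$, and if every minimal graph is strictly stable then every critical point is a nondegenerate local minimum, so by properness and a mountain-pass argument there is exactly one. The paper does precisely this, with the key Lemma~\ref{graphstability} asserting stability.

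However, the ``heart of the argument'' is not what you describe. You propose to linearize, obtain a Jacobi field from a hypothetical degeneracy, and kill it with the maximum principle. The paper never applies the maximum principle to a linearized or Jacobi-type quantity. Instead it applies the maximum principle to a \emph{nonlinear} quantity built directly from the harmonic maps themselves: setting $w_i=\ln\big(\|\partial u_i\|/\|\overline{\partial}u_i\|\big)$, the Bochner identities \eqref{bochner} give that both $w_1$ and $w_2$ satisfy the same sinh-Gordon equation $\Delta w_i=4\|\Phi\|\sinh w_i$ (using $\|\Phi(u_1)\|=\|\Phi(u_2)\|$ since $\Phi(u_1)=-\Phi(u_2)$). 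Their difference solves $\Delta(w_2-w_1)=4\|\Phi\|(\sinh w_2-\sinh w_1)$ on $\Sigma_\fkp$, and the puncture asymptotics \eqref{equation3} show $w_2-w_1\to -\infty$ at each $p_i$ precisely under the hypothesis on the angles. The maximum principle then gives the \emph{pointwise} inequality $w_2\leq w_1$, hence $\|\partial u_2\|\leq\|\partial u_1\|$ and $e(u_2)\leq e(u_1)$.

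This pointwise inequality is what feeds into the second-variation formula (Lemma~\ref{variationarea}): it allows replacing the denominator $e(u_1)+e(u_2)$ by $2e(u_2)$, after which a Cauchy--Schwarz bound on $\|\Phi'(u_2)\|^2$ against $\|\nabla^{u_2}\psi\|^2\|du_2\|^2$ yields $A''\geq -\int_\Gamma \tr_{g_\Gamma} R^{g_2}(du_2,\psi,\psi,du_2)\,dv_\Gamma>0$. So stability is obtained for \emph{all} normal variations at once, not by excluding Jacobi fields one at a time. Your description of the puncture analysis as ``controlling boundary terms in an integration by parts'' is therefore off target: the hypothesis $\alpha_i<\alpha'_i$ enters solely to force the correct sign of $w_2-w_1$ near the punctures so that the maximum principle applies on the open surface; no boundary terms appear in the stability computation.
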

The proof follows from the stability of $\Gamma$.

\begin{lemma}\label{graphstability}
Under the same conditions as in Proposition \ref{uniqueness}, a minimal graph $\Gamma\in (\Sigma_\fkp\times\Sigma_\fkp,g_1\oplus g_2)$ is stable.
\end{lemma}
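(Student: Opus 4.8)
The plan is to verify that the second variation of area (the index form) of the minimal surface $\Gamma=f(\Sigma_\fkp)$, with $f=(u_1,u_2)$, is non-negative on every admissible normal variation. Since $\Gamma$ has codimension two in $M=(\Sigma_\fkp\times\Sigma_\fkp,g_1\oplus g_2)$, the index form acts on sections $V$ of the rank-two normal bundle $N\Gamma$,
$$I(V,V)=\int_\Gamma\Big(\|\nabla^\perp V\|^2-\sum_{a=1}^2\langle R^M(V,e_a)e_a,V\rangle-|A_V|^2\Big)\,dA,$$
with $(e_1,e_2)$ an orthonormal frame of $T\Gamma$ and $A_V$ the shape operator in the direction $V$. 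First I would express all the geometric ingredients (the second fundamental form $A$, the ambient curvature term, and a frame of $N\Gamma$ adapted to the splitting $TM=T\Sigma_\fkp\oplus T\Sigma_\fkp$) in terms of the harmonic map data, using $\|\Phi(u_i)\|=\|\partial u_i\|\,\|\overline\partial u_i\|$, $e(u_i)=\|\partial u_i\|^2+\|\overline\partial u_i\|^2$ and $J(u_i)=\|\partial u_i\|^2-\|\overline\partial u_i\|^2$.

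The mechanism for the sign is that each factor has curvature $-1$, so on the smooth part the ambient term $-\sum_a\langle R^M(V,e_a)e_a,V\rangle$ is favorable, while the hypotheses force both projections $p_i|_\Gamma$ to be orientation preserving, i.e.\ $J(u_i)>0$ and in particular $\|\partial u_i\|\neq0$. The positivity $\|\partial u_i\|\neq0$ is exactly where the maximum principle enters: it is read off from the Bochner identities (\ref{bochner}), the elliptic equations satisfied by $\ln\|\partial u_i\|$ and $\ln\|\overline\partial u_i\|$. In the classical codimension-one graph case, stability comes from the positive Jacobi field generated by an ambient translation; here the hyperbolic cone factors carry no Killing field, so the plan is to manufacture the analogous positive solution of the Jacobi equation directly out of the functions $\|\partial u_i\|$. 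Concretely I would use the product/K\"ahler structure to reduce $I$ to a quadratic form to which a positive-supersolution (Fischer--Colbrie--Schoen type) argument applies, and verify the required Jacobi inequality by differentiating $\ln\|\partial u_i\|$ via (\ref{bochner}) and invoking $J(u_i)>0$.

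The main obstacle is that $\Gamma$ passes through the cone points $(p_i,p_i)$, where $g_\Gamma$ has a conical singularity and the ambient curvature is no longer non-positive, so neither the index-form identity nor the Jacobi comparison is automatically valid there. This is where $\alpha_i<\alpha'_i$ is used: it selects $g_1$ as the dominant factor, so that $g_\Gamma$ has angle $2\pi\alpha_i$ at $p_i$ and, by the expansions (\ref{equation3}), $\|\overline\partial u_1\|=O(r^\epsilon)\|\partial u_1\|$ near $p_i$; these expansions give the precise blow-up rates of $A$ and of the curvature term and show that they, together with $\|\nabla^\perp V\|^2$ and $|A_V|^2$ for $V$ in the correct weighted space, are integrable. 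The plan is then to run the estimate on $\Sigma_\fkp$ with the punctures excised and to let the excision radius tend to zero along a well-chosen sequence $t_n\to0$, exactly as in the proof of Theorem \ref{ipp}, so that the boundary terms created by the integration by parts are $o(1)$. Identifying the correct weighted space of admissible normal variations and proving that these boundary contributions vanish is the hard part; granting it, the pointwise sign on the smooth part yields $I(V,V)\ge0$, i.e.\ the stability of $\Gamma$.
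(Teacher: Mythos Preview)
Your plan differs substantially from the paper's argument and, as written, misses the key mechanism. The paper does not work with the index form on the normal bundle at all. Instead it proves a formula of Wan type (Lemma~\ref{variationarea}): since any variation of $\Gamma$ can be realized as a variation of $u_2$ alone, one has
\[
A'' \;=\; E''(u_2)\;-\;4\int_\Gamma \frac{\|\Phi'(u_2)\|^2}{e(u_1)+e(u_2)}\,dv_\Gamma .
\]
The decisive step is a \emph{comparison between the two harmonic maps}: with $w_i=\ln\bigl(\|\partial u_i\|/\|\overline\partial u_i\|\bigr)$, both $w_i$ satisfy $\Delta w_i=4\|\Phi\|\sinh w_i$ on $\Sigma_\fkp$, and the hypothesis $\alpha_i<\alpha'_i$ forces $w_2-w_1\to-\infty$ at each puncture (equation~\eqref{behavioratpuncture}). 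The maximum principle applied to $w_2-w_1$ then gives $w_2\le w_1$, hence $\|\partial u_2\|\le\|\partial u_1\|$ and, via the monotonicity of $x\mapsto x+\|\Phi\|^2x^{-1}$, the inequality $e(u_2)\le e(u_1)$. This lets one replace $e(u_1)+e(u_2)$ by $2e(u_2)$ in the denominator above; a Cauchy--Schwarz bound $\|\Phi'(u_2)\|^2\le\tfrac14\|\nabla^{u_2}\psi\|^2\|du_2\|^2$ then shows $A''$ dominates the curvature term in $E''(u_2)$, which is strictly positive since $g_2$ has curvature $-1$.

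So the hypothesis $\alpha_i<\alpha'_i$ is not a technicality ensuring integrability of boundary terms, as you suggest; it is the source of the pointwise inequality $e(u_2)\le e(u_1)$ without which the negative $\|\Phi'\|^2$ term is uncontrolled. Your proposed route---a Fischer--Colbrie--Schoen positive-supersolution argument built from $\|\partial u_i\|$---is problematic in codimension two, where the stability operator acts on sections of a rank-two bundle and a scalar supersolution does not directly yield nonnegativity of the index form; you have not indicated how the ``product/K\"ahler structure'' effects such a reduction. (Also, $J(u_i)>0$ holds simply because $u_i$ is an orientation-preserving diffeomorphism, not by a maximum-principle argument on~\eqref{bochner}.)
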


\begin{proof}
Let $\Gamma$ be a minimal graph in $(\Sigma_\fkp\times\Sigma_\fkp,g_1\oplus g_2)$, and denote by $u_i$ the $i^{th}$ projection from $\Gamma$ to $(\Sigma,g_i)$ (for $i=1,2$). As $\Gamma$ is minimal, the $u_i$ are harmonic and $\Phi(u_1)+\Phi(u_2)=0$.

Stability of minimal graph in products of surfaces has been studied for the classical case in \cite{wan}. We have the following lemma:
\begin{lemma}\label{variationarea}
Let $\Gamma$ be a minimal graph in $(\Sigma_\fkp\times\Sigma_\fkp,g_1\oplus g_2)$, then the second variation of the area functional under a deformation of $\Gamma$ fixing its intersection with the singular loci is given by:
\begin{equation}\label{area}
A''=E_2''-4\int_\Gamma\frac{\|\Phi'(u_2)\|^2}{e(u_1)+e(u_2)} vol_\Gamma,
\end{equation}
where $E''_2$ is the second variation of the energy of $u_2$ and $\Phi'(u_2)$ is the variation of the Hopf differential of $u_2$.
\end{lemma}

\begin{proof}
By definition, the area of $\Gamma$ is given by:
$$A=\int_\Gamma \left(\det(u_1^*g_1\oplus u_2^*g_2)\right)^{1/2}\vert dz\vert^2.$$
But we have:
\begin{eqnarray*}
\det(u_1^*g_1\oplus u_2^*g_2) & = & \det\left(\rho^2(e(u_1)+e(u_2))\vert dz\vert^2+2\Re(\Phi(u_1)+\Phi(u_2))\right) \\
& = & \det\big(\rho^2(e(u_1)+e(u_2))(dx^2+dy^2)+2\Re(\phi(u_1)+\phi(u_2))(dx^2-dy^2)- \\
& - & 2\Im(\phi(u_1)+\phi(u_2))(dxdy+dydx)\big) \\
& = & \rho^4(e(u_1)+e(u_2))^2-4\vert \phi(u_1)+\phi(u_2)\vert^2,
\end{eqnarray*}
where $\Phi(u_i)=\phi(u_i)dz^2$.
It follows that
$$A=\int_\Gamma \left (e(u_1)+e(u_2))^2-4\| \Phi(u_1)+\Phi(u_2)\|^2\right)^{1/2}dv_\Gamma.$$
Writing
$$a:=(e(u_1)+e(u_2))^2-4\|\Phi(u_1)+\Phi(u_2)\|^2,$$
we get 
$$A=\int_\Gamma a^{1/2}dv_\Gamma.$$
Recall that, for $i=1,2$, we have
$$E(u_i)=\int_{\Sigma_\fkp} e(u_i)dv_{\Gamma}.$$

Denote by $v_{1,t}$ and $v_{2,t}$ be the variations of $u_1$ and $u_2$ respectively corresponding to a variation $\Gamma_t$ of $\Gamma$. Set $\psi_i:=\frac{d}{dt}_{\vert t=0} v_{i,t}$ which is a section of $u_i^*T\Sigma_\fkp$. Denote by $\nabla^{u_i}$ the pull-back by $u_i$ of the Levi-Civita connection on $(\Sigma_\fkp,g_i)$. In particular, we have:
$$\frac{d}{dt}_{\vert t=0} dv_{i,t}=\nabla^{u_i}\psi_i.$$
Now we have:
$$A''(\Gamma) = \displaystyle{\frac{d^2}{dt^2}_{\vert t=0}}\int_\Gamma a_t^{1/2}dv_\Gamma=\frac{1}{2} \int_\Gamma (a^{-1/2}a''-\frac{1}{2}a^{-3/2}a'^2)dv_\Gamma.$$
But 
\begin{eqnarray*}
a' & = & \displaystyle{\frac{d}{dt}_{\vert t=0}} \left( (e(v_{1,t})+e(v_{2,t}))^2-4(\| \Phi(v_{1,t})+\Phi(v_{2,t})\|^2\right) \\
& = & 2(e(u_1)+e(u_2))(e'(u_1)+e'(u_2))-8\langle \Phi'(u_1)+\Phi'(u_2),\Phi(u_1)+\Phi(u_2)\rangle \\
& = & 2(e(u_1)+e(u_2))(e'(u_1)+e'(u_2)),
\end{eqnarray*}
and 
\begin{eqnarray*}
a'' & = & \displaystyle{\frac{d^2}{dt^2}_{\vert t=0}} \left( (e(v_{1,t})+e(v_{2,t}))^2-4(\| \Phi(v_{1,t})+\Phi(v_{2,t})\|^2\right) \\
& = & 2(e'(u_1)+e'(u_2))^2+2(e(u_1)+e(u_2))(e''(u_1)+e''(u_2))-8\|\Phi'(u_1)+\Phi'(u_2)\|^2.
\end{eqnarray*}
Hence,
$$a^{-1/2}a''-\frac{1}{2}a^{-3/2}a'^2= 2(e''(u_1)+e''(u_2))-8\frac{\|\Phi'(u_1)+\Phi'(u_2)\|^2}{e(u_1)+e(u_2)}.$$
It follows

$$A''(\Gamma) = E''(u_1)+E''(u_2)-4\int_\Gamma \frac{\|\Phi'(u_1)+\Phi'(u_2)\|^2}{e(u_1)+e(u_2)}dv_\Gamma.$$

Now, as pointed out in \cite{wan}, such a variation can be realized as a variation of $u_2$ only since the variation of $u_1$ can be interpreted as a change of coordinates which does not change the area functional. So, setting $\psi_1=0$, we get the formula.
\end{proof}

Writing $\displaystyle{w_i:=\ln \frac{\|\partial u_i \|}{\|\overline{\partial}u_i\|}}$ and using equation (\ref{bochner}), we obtain:
\begin{eqnarray*}
\Delta w_i & = & \Delta \ln\|\partial u_i \| - \Delta \ln \| \overline{\partial} u_i\| \\
& = & 2\|\partial u_i \|^ 2-2\|\overline{\partial}u_i\|^2 \\
& = & 2\|\Phi\|\left( \frac{\| \partial u_i \|}{\| \overline{\partial}u_i\|} - \left(\frac{\| \partial u_i \|}{\| \overline{\partial}u_i\|}\right)^{-1}\right) \\
& = & 4\| \Phi\| \sinh w_i,
\end{eqnarray*}
where $\|\Phi\|=\|\Phi(u_1)\|=\|\Phi(u_2)\|$. That is, $w_1$ and $w_2$ satisfy the same equation. Note that, outside $\fkp$, the singularities of $w_1$ and $w_2$ are the same. In fact, singularities of $w_i$ correspond to zeros of $\| \partial u_i\|$ (as $J(u_i)=\|\partial u_i \|^2-\|\overline{\partial}u_i \|^2>0$). But as $\|\Phi(u_1)\|=\|\partial u_1\|\|\overline{\partial}u_1\|=\|\partial u_2\|\|\overline{\partial}u_2\|$, the zeros of $\|\partial u_1\|$ and $\|\partial u_2\|$ are the same. In particular, $w_2-w_1$ is a regular function on $\Sigma_\fkp$ satisfying:
\begin{equation}\label{Bochner}
\Delta (w_2-w_1) = 4\| \Phi \| (\sinh w_2 -\sinh w_1).
\end{equation}
Let's study the behavior of $w_1-w_2$ at a singularity $p\in\fkp$. Using the same notation as above,  the norm of the Hopf differentials satisfy:
\begin{eqnarray*}
\rho^2(z)\| \Phi(u_1)\|(z) & = & \sigma_1^2(u_1)\vert\partial_zu_1\vert\vert\partial_z\overline{u}_1\vert \\
& = & c_1^2\vert \lambda_1 z +r^{1+\epsilon}f\vert^{2(\alpha-1)} \vert \lambda_i + r^\epsilon L(f_1)\vert \vert r^\epsilon \overline{L}(f_1)\vert \\
  & = & c_1^2\vert \overline{L}(f_1)\vert \vert \lambda_i\vert^{2\alpha-1} r^{2(\alpha-1)+\epsilon}(1+O(r^\epsilon))
\end{eqnarray*} 
and

\begin{eqnarray*}
\rho^2(z)\| \Phi(u_2)\|(z) & = & \sigma_1^2(u_2)\vert\partial_zu_2\vert\vert\partial_z\overline{u}_2\vert \\
& = & c_2^2\vert \lambda_2 z +r^{1+\epsilon}f_2\vert^{2(\alpha'-1)} \vert \lambda_2 + r^\epsilon L(f_2)\vert \vert r^\epsilon \overline{L}(f_2)\vert \\
 & = & c_2^2\vert \overline{L}(f_2)\vert \vert \lambda_i\vert^{2\alpha'-1} r^{2(\alpha'-1)+\epsilon}(1+O(r^\epsilon)).
\end{eqnarray*}
Hence, using $\|\Phi(u_1)\|=\|\Phi(u_2)\|$,

$$\left\vert \frac{\overline{L}(f_1)}{\overline{L}(f_2)}\right\vert=r^{2(\alpha'-\alpha)}C,$$
where $C$ is a non-vanishing bounded function.
Now, using equation (\ref{equation3}), we obtain:

$$w_i  =  \ln\left( \frac{\vert \lambda_i\vert}{r^\epsilon\vert \overline{L}(f_i)\vert}(1+ O(r^\epsilon))\right)  =  \ln\left( \frac{\vert\lambda_i\vert}{r^\epsilon\vert \overline{L}(f_i)\vert}\right) + O(r^\epsilon).$$
In particular,
\begin{equation}\label{behavioratpuncture}
w_2-w_1=2(\alpha-\alpha')\ln r + C',
\end{equation}
where $C'$ is a bounded function.
As $\alpha-\alpha'> 0$, $w_2-w_1$ tends to $-\infty$ at the singularities.

So we can apply the maximum principle to equation (\ref{Bochner}), and we obtain that $w_2\leq w_1$. Using $\| \Phi(u_1)\|=\|\Phi(u_2)\|=\| \Phi\|$, we finally obtain:
$$\| \partial u_2 \| \leq \| \partial u_1\|.$$
Let's consider the function $\displaystyle{f(x)=x+\| \Phi\|^2x^{-1}}$ defined on $\mathbb{R}_{>0}$. Its derivative is $\displaystyle{f'(x)=1-\|\Phi\|^2x^{-2}}$, so $f$ is increasing for $\displaystyle{x\geq \|\Phi\|}$.
As $J(u_2)>0$,
$$\| \partial u_2\|^2\geq \| \partial u_2 \| \|\overline{\partial}u_2 \|=\frac{\| \Phi \|}{2}.$$
Applying $f$ to $\| \partial u_2 \|^2\leq \| \partial u_1 \|^2$, we get
$$e(u_2)\leq e(u_1).$$
So, from equation (\ref{area}), we obtain:
$$A''\geq E''_2-2\int_\Omega\frac{\|\Phi'(u_2)\|^2}{e(u_2)}vol_\Gamma.$$
Let $\psi:=\frac{d}{dt}_{\vert t=0}v_t$ be a deformation of $u_2$ (so $\psi$ is a section of $u_2^*T\Sigma_\fkp$). We have the following expression (see e.g \cite[Equation 2]{smith}):

$$E''(u_2)=\int_{\Gamma}\left( \langle \nabla^{u_2}\psi,\nabla^{u_2}\psi\rangle-tr_{g_\Gamma}R^{g_2}(du_2,\psi,\psi,du_2)\right)dv_\Gamma,$$
where $R^{g_2}$ is the curvature tensor on $(\Sigma_\fkp,g_2)$, $\nabla^{u_2}$ is the pull-back by $u_2$ of the Levi-Civita connection on $(\Sigma_\fkp,g_2)$ and the scalar product is taken with respect to the metric $g_\Gamma^*\otimes u_2^*g_2$ on $T^*\Gamma\otimes u_2^*T\Sigma_\fkp$.
Computing $\Phi'$, we get:
\begin{eqnarray*}
 \Phi'  & = & \displaystyle{\frac{d}{dt}_{\vert t=0}} v_t^*g_2(\partial_z,\partial_z)dz^2 \\
 & = & \displaystyle{\frac{d}{dt}_{\vert t=0}} g_2(dv_t(\partial_z),dv_t(\partial_z))dz^2 \\
 & = & 2 g_2(\nabla^{u_2}\psi(\partial_z),du_2(\partial_z))dz^2.
\end{eqnarray*}
That is
$$\| \Phi'\| ^2 = 4\sigma^2(u_2)\vert\langle\nabla^{u_2}\psi(\partial_z),du_2(\partial_z)\rangle\vert^2,$$
(where $\langle.,.\rangle$ is the scalar product with respect to $g_2$). By Cauchy-Schwarz and equation (\ref{normcomputation}), we get
\begin{eqnarray*}
\| \Phi'\| ^2 & \leq & 4\sigma^2(u) \left\vert \langle \nabla^{u_2}\psi(\partial_z),\overline{\nabla^{u_2}\psi(\partial_z)}\rangle\right\vert\left\vert\langle du_2(\partial_z),\overline{du_2(\partial_z)}\rangle\right\vert \\
& \leq & \frac{1}{4}\|\nabla^{u_2}\psi\|^2 \|du_2\|^2.
\end{eqnarray*}
Hence,
$$\int_{\Gamma}\frac{\|\Phi'\|^2}{e(u_2)}vol_\Gamma\leq\frac{1}{2}\int_\Gamma \langle \nabla^u\psi,\nabla^u\psi\rangle vol_\Gamma.$$
Finally, we obtain:
$$A''\geq -\int_\Gamma tr_{g_\Gamma}R^{g_2}(du,\psi,\psi,du)dv_\Gamma.$$
But as the sectional curvature of $(\Sigma_\fkp,g_2)$ is $-1$, the right-hand side of the last equation is strictly positive (for a non zero $\psi$). So $\Gamma$ is strictly stable.
\end{proof}
Now, using the classical estimates (see \cite[Proposition p.126]{eells-sampson} or the proof of lemma \ref{variationarea}),
$$\text{Area}(\Gamma)\leq E(\Psi)$$
and equality holds if and only if $\Psi$ is a minimal immersion. It follows from the stability of $\Gamma$ that the critical points of $\E_{g_1}+\E_{g_2}$ can only be minima. But a proper function whose unique extrema are minima with non-degenerate Hessian admits a unique minimum. So $\Psi$ is the unique minimal diffeomorphism isotopic to the identity.

\bibliographystyle{alpha}
\bibliography{Minimal diffeomorphism between singular hyperbolic surfaces.bbl}

\begin{thebibliography}{Mon05b}

\bibitem[BM12]{barbot}
T.~Barbot and C.~Meusburger.
\newblock Particles with spin in stationary flat spacetimes.
\newblock {\em Geom. Dedicata}, 161:23--50, 2012.

\bibitem[Che80]{cheeger}
J.~Cheeger.
\newblock On the {H}odge theory of {R}iemannian pseudomanifolds.
\newblock In {\em Geometry of the {L}aplace operator ({P}roc. {S}ympos. {P}ure
  {M}ath., {U}niv. {H}awaii, {H}onolulu, {H}awaii, 1979)}, Proc. Sympos. Pure
  Math., XXXVI, pages 91--146. Amer. Math. Soc., Providence, R.I., 1980.

\bibitem[ES64]{eells-sampson}
J.~J. Eells and J.~H. Sampson.
\newblock Harmonic mappings of {R}iemannian manifolds.
\newblock {\em Amer. J. Math.}, 86:109--160, 1964.

\bibitem[FT84]{fischertromba}
A.~E. Fischer and A.~J. Tromba.
\newblock On the {W}eil-{P}etersson metric on {T}eichm\"uller space.
\newblock {\em Trans. Amer. Math. Soc.}, 284(1):319--335, 1984.

\bibitem[GR10]{jesse}
J.~Gell-Redman.
\newblock Harmonic maps into conic surfaces with cone angles less than $2\pi$.
\newblock {\em To appear in Communications in Analysis and Geometry}, 2010.

\bibitem[KS08]{krasnovschlenkerrenormalized}
K.~Krasnov and J.-M. Schlenker.
\newblock On the renormalized volume of hyperbolic 3-manifolds.
\newblock {\em Comm. Math. Phys.}, 279(3):637--668, 2008.

\bibitem[KS12]{krasnovschlenkersurvey}
K.~Krasnov and J.-M. Schlenker.
\newblock The {W}eil-{P}etersson metric and the renormalized volume of
  hyperbolic 3-manifolds.
\newblock In {\em Handbook of {T}eichm\"uller theory. {V}olume {III}},
  volume~17 of {\em IRMA Lect. Math. Theor. Phys.}, pages 779--819. Eur. Math.
  Soc., Z\"urich, 2012.

\bibitem[Lab92]{labourie}
F.~Labourie.
\newblock Surfaces convexes dans l'espace hyperbolique et
  $\mathbb{CP}^1$-structures.
\newblock {\em J. London Math. Soc. (2)}, 45(3):549--565, 1992.

\bibitem[McO88]{mcowen}
R.~C. McOwen.
\newblock Point singularities and conformal metrics on {R}iemann surfaces.
\newblock {\em Proc. Amer. Math. Soc.}, 103(1):222--224, 1988.

\bibitem[Mon05a]{thesegregoire}
G.~Montconquiol.
\newblock D\'eformation de m\'etriques \text{E}instein sur des vari\'et\'es \`a
  singularit\'e conique.
\newblock {\em Th\`ese de l'Universit\'e Paul Sabatier}, 2005.

\bibitem[Mon05b]{gregoire}
G.~Montcouquiol.
\newblock On the rigidity of hyperbolic cone-manifolds.
\newblock {\em C. R. Math. Acad. Sci. Paris}, 340(9):677--682, 2005.

\bibitem[Sam78]{sampson}
J.~H. Sampson.
\newblock Some properties and applications of harmonic mappings.
\newblock {\em Ann. Sci. \'Ecole Norm. Sup. (4)}, 11(2):211--228, 1978.

\bibitem[Sch93]{schoen}
R.~M. Schoen.
\newblock The role of harmonic mappings in rigidity and deformation problems.
\newblock In {\em Complex geometry ({O}saka, 1990)}, volume 143 of {\em Lecture
  Notes in Pure and Appl. Math.}, pages 179--200. Dekker, New York, 1993.

\bibitem[Sch12]{unbounded}
K.~Schm{\"u}dgen.
\newblock {\em Unbounded self-adjoint operators on {H}ilbert space}, volume 265
  of {\em Graduate Texts in Mathematics}.
\newblock Springer, Dordrecht, 2012.

\bibitem[Smi75]{smith}
R.~T. Smith.
\newblock The second variation formula for harmonic mappings.
\newblock {\em Proc. Amer. Math. Soc.}, 47:229--236, 1975.

\bibitem[ST11]{schumachertrapani}
G.~Schumacher and S.~Trapani.
\newblock Weil-{P}etersson geometry for families of hyperbolic conical
  {R}iemann surfaces.
\newblock {\em Michigan Math. J.}, 60(1):3--33, 2011.

\bibitem[SY78]{schoenyau}
R.~Schoen and S.~T. Yau.
\newblock On univalent harmonic maps between surfaces.
\newblock {\em Invent. Math.}, 44(3):265--278, 1978.

\bibitem[Tou13]{toulisse}
J.~Toulisse.
\newblock Maximal surface in {AdS} convex {GHM} 3-manifold with particles.
\newblock {\em arXiv:1312.2724}, 2013.

\bibitem[Tro86]{troyanov}
M.~Troyanov.
\newblock Les surfaces euclidiennes \`a singularit\'es coniques.
\newblock {\em Enseign. Math. (2)}, 32(1-2):79--94, 1986.

\bibitem[Tro92]{tromba}
A.~J. Tromba.
\newblock {\em Teichm\"uller theory in {R}iemannian geometry}.
\newblock Lectures in Mathematics ETH Z\"urich. Birkh\"auser Verlag, Basel,
  1992.
\newblock Lecture notes prepared by Jochen Denzler.

\bibitem[Wan97]{wan}
T.~Y.~H. Wan.
\newblock Stability of minimal graphs in products of surfaces.
\newblock In {\em Geometry from the {P}acific {R}im ({S}ingapore, 1994)}, pages
  395--401. de Gruyter, Berlin, 1997.

\bibitem[Wol89]{wolf}
M.~Wolf.
\newblock The {T}eichm\"uller theory of harmonic maps.
\newblock {\em J. Differential Geom.}, 29(2):449--479, 1989.

\end{thebibliography}

\end{document}